\setlist[enumerate]{labelsep=*, leftmargin=1.5pc}
\setlist[enumerate]{label=\normalfont(\roman*), ref=\roman*}
\theoremstyle{plain}
\newtheorem{thm}{Theorem}[section]
\newtheorem{pro}[thm]{Proposition}
\newtheorem{lem}[thm]{Lemma}
\newtheorem{rmk}[thm]{Remark}
\newtheorem{cor}[thm]{Corollary}
\theoremstyle{definition}
\newtheorem{dfn}[thm]{Definition}
\newtheorem{eg}[thm]{Example}
\newtheorem{con}[thm]{Construction}
\DeclareMathOperator{\codim}{codim}
\DeclareMathOperator{\Spec}{Spec}
\DeclareMathOperator {\rk} {rank}
\newcommand {\logdim} [1][]{\operatorname{logdim}_{\ifthenelse{\equal{#1}{}}{k}{#1}}}
\newcommand {\ul} [1]{\underline{#1}}
\newcommand {\Log} [1]{\mathcal{L}_{#1}}
\newcommand{\catname}[1]{{\normalfont\textbf{#1}}}
\def \Post {\text{\Letter}}
\def \Alphabet {A,B,C,D,E,F,G,H,I,J,K,L,M,N,O,P,Q,R,S,T,U,V,W,X,Y,Z}
\def \alphabet {a,b,c,d,e,f,g,h,i,j,k,l,m,n,o,p,q,r,s,t,u,v,w,x,y,z}
\newcommand{\one}{\mathbf{1}}
\newcommand \CharSheaf [1] {\overline{\mathcal{M}}_{#1}}
\newcommand{\ComSquare}[8]{
\begin{tikzcd}[ampersand replacement=\&]
  #1 \arrow{r}{#5} \arrow{d}{#8} \& #2 \arrow{d}{#6} \\
#4 \arrow{r}{#7} \& #3
\end{tikzcd}
}
\begin{document}

\title{
	{Bivariant classes and Logarithmic Chow theory}\\
}
\author{Lawrence Jack Barrott}

\maketitle

\begin{abstract}

We describe a refined Chow theory for log schemes extending the theory of $b$-Chow suggested in section 9 of~\cite{MultiplicativityOfTheDoubleRamificationCycle} based off of a definition appearing in~\cite{Shokurov}. This produces a dimension graded family of Abelian groups supporting a push-forward and pull-back along proper and log flat morphisms respectively, together with a bivariant theory satisfying Poincar\'e Duality. In the final section we relate this construction to the construction of the log normal cone in~\cite{LeosPaper}.

\end{abstract}

\section{Introduction}

The \emph{Chow groups} of a scheme $X$ classify integral subschemes of $X$ up to rational equivalent. Two integral subschemes $V_0$ and $V_1$ are \emph{rationally equivalent} if there is a flat family over $\PP^1$ such that $V_0$ and $V_1$ appear as fibres of this family. The Chow group $A_k (X)$ is then the quotient of the free Abelian group generated by integral $k$-dimensional subschemes of $X$, $B_k (X)$, modulo the subgroup generated by the above relation. We call elements of $A_k (X)$ \emph{cycle classes} and elements of $B_k (X)$ \emph{cycles}. These satisfy a collection of properties whose proofs can be found in~\cite{Fulton}:

\begin{enumerate}
\item If $f: X \to Y$ is a proper map then there is a well-defined pushforward morphism $f_*: A_k(X) \to A_k (Y)$.
\item If $f: X \to Y$ is a flat map of relative dimension $d$ then there is a well-defined pullback morphism $f^*: A_k(Y) \to A_{k+d} (X)$.
\item If $f: X \to Y$ is a morphism of relative dimension $d d = \dim X - \dim Y$ with $X$ and $Y$ both smooth over $\Spec k$ then there is a well-defined Gysin pullback morphism $f^! : A_k(Y) \to A_{k+d} (X)$.
\item If $f: X \to Y$ is a vector bundle of rank $d$ then $f^*$ is an isomorphism.
\item If $i: D \to X$ is the inclusion of a closed subscheme and $j:U \to X$ the open complement then the following is an exact sequence, called the \emph{excision sequence}:
\[ A_k (D) \xrightarrow{i_*} A_k (X) \xrightarrow{j^*} A_k (U) \to 0 \]
\end{enumerate}

For a smooth scheme $X$ the Gysin pullback along the diagonal map $X \to X \times X$ induces a ring structure on $A_* (X)$. This structure is compatible with Gysin maps and flat pullback, but not pushforward.

Given any scheme $X$ one can define the \emph{large $b$-Chow group}, this is the limit over the Chow group of all proper birational morphisms, or modifications, $\tilde{X} \to X$, with transition maps pushforward along all birational morphisms as schemes over $X$. This is the definition used by Aluffi in~\cite{Aluffi}, see Definition 2.5. It does not however satisfy any of the properties listed above. The property of being birational is not stable under formation of fibre products.

Instead one could consider the small $b$-Chow ring for a scheme $X$ for which resolution of singularities holds. This is the \emph{colimit} over the Chow groups of all proper birational morphisms $\tilde{X} \to X$ with $\tilde{X}$ smooth and transition maps Gysin pullbacks along all proper birational morphisms as schemes over $X$. As the name suggests this supports an intersection product and acts on the above large $b$-Chow group, however for the same reason as above it does not satisfy any other properties. It also is not well defined unless resolution of singularities holds, so for non-reduced schemes this presents issues.

More generally one could consider the colimits of the Chow cohomology rings $A^* (X)$, described by Fulton in~\cite{Fulton}, and used by Aluffi in~\cite{Aluffi} in the case of divisors, see Definition 2.8. When $X$ satisfies resolution of singularities these map to the inverse limit of the Chow groups under pushforward, as the small $b$-Chow groups embed into the large $b$-Chow groups. However these still do not satisfy the above properties and are rarely calculable outside of the smooth case, although see~\cite{BlochGilletSoule} Section 1.6 for one example where descriptions are known and~\cite{vG2},~\cite{vG} for an application of this description.

\subsection{Log geometry}

In this paper we propose a solution to some of these problems, extending an idea in section 9 of~\cite{MultiplicativityOfTheDoubleRamificationCycle} and applying it to the construction of~\cite{LeosPaper}. The spaces these authors study have a naturally defined \emph{log structure}.

The theory of \emph{log geometry} was introduced by Kato, based on ideas of Deligne and Illusie, in~\cite{Kato}. It studies schemes $\ul{X}$ equipped with a log structure consisting of a sheaf of monoids $\cM_X$, together with a morphism $\alpha_X : \cM_X \to \cO_{\ul{X}}$ which we require to be an isomorphism on the submonoid $\cO_{\ul{X}}^*$. Associated to this is another sheaf, the \emph{characteristic sheaf}, the quotient $\cM_X / \alpha^{-1} \cO_X^*$. There are notions of log flatness, log \'etaleness and log smoothness satisfying similar properties to the classical notion. We focus on the category of \emph{fs} log schemes, whose definition can be found in~\cite{Kato}. In the introduction we will include the phrase fs to emphasise the difference from the classical construction, in the body however we will leave it implicit. We will conflate $\Spec k$ with \emph{the trivial log point}, the log scheme with $\ul{X} = \Spec k$, $\cM_X \cong k^*$ and $\alpha_X$ the inclusion.

In nice cases, for instance on the moduli space of stable curves, this structure specifies a collection of normal crossings divisors. Then section 9 of~\cite{MultiplicativityOfTheDoubleRamificationCycle}, and earlier work of Holmes in~\cite{ExtendingTheDoubleRamificationCycleByResolvingTheAbelJacobiMap}, suggest looking at only \emph{log blow-ups}, weighted blow ups of intersections of these divisors. In nice situations (when $X$ is \emph{log smooth} over the trivial log point) this presents a much smaller category of blow-ups but still allows the singularities to be resolved and the small $b$-Chow construction to be applied.

This construction however still only applies for log smooth schemes. In this paper we extend the above construction to all log schemes by constructing Gysin maps even when the underlying schemes are not smooth but so long as the log structure is \emph{locally  free}, locally isomorphic to $\cO_{\ul{X}}^* \oplus \NN^r$. This produces the following theorem, a translation of the above list, whose proofs are unfortunately necessarily scattered throughout this paper:

\begin{thm}[Constructions~\ref{con:pushforward} and~\ref{con:pullback}, Example~\ref{eg:SmoothGysin} and Corollary~\ref{cor:excision} respectively]

  There is a functor $A_* ^\dagger: \catname{(fs) Log Sch} \to \catname {Gr Ab}$ from the category of (fs) log schemes of finite type over $\Spec k$ to the category of graded Abelian groups.

  \begin{enumerate}
\item If $f: X \to Y$ is a proper map then there is a well-defined pushforward morphism $f_*: A^\dagger_k(X) \to A^\dagger_k (Y)$.
\item If $f: X \to Y$ is a log flat map of relative dimension $d$ then there is a well-defined pullback morphism $f^*: A^\dagger _k(Y) \to A^\dagger_{k+d} (X)$.
    \item If $f: X \to Y$ is a morphism of relative dimension $d = \dim X - \dim Y$ with $X$ and $Y$ both log smooth over $\Spec k$ then there is a well-defined Gysin pullback morphism $f^! : A^\dagger_k(Y) \to A^\dagger_{k+d} (X)$.
    \item If $f: X \to Y$ is a strict morphism of log schemes with $\ul{X} \to \ul{Y}$ a vector bundle of rank $d$ then $f^*$ is an isomorphism.
    \item If $i: D \to X$ is the strict inclusion of a closed subscheme and $j:U \to X$ the strict inclusion of the open complement then the following is an exact sequence, called the \emph{excision sequence}:
\[ A^\dagger_k (D) \xrightarrow{i_*} A^\dagger_k (X) \xrightarrow{j^*} A^\dagger_k (U) \to 0 \]
  \end{enumerate}
  
\end{thm}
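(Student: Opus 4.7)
The plan is to realise $A_*^\dagger(X)$ as a colimit of classical Chow groups $A_*(\tilde X)$ indexed by the filtered category of log blow-ups $\pi : \tilde X \to X$, with transition maps given by Gysin pullback along log \'etale modifications. The basic combinatorial input is that any two log blow-ups are dominated by a common third --- the replacement of Hironaka's resolution by subdivision of fans --- which makes the index category filtered, so that the colimit preserves exact sequences of abelian groups. This will feed directly into excision, and will also be used to compare different choices of resolution in the constructions of pushforward, pullback and Gysin.

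For proper pushforward (i), given $f : X \to Y$ proper and a log blow-up $\tilde Y \to Y$, form the log fibre product $\tilde X := X \times_Y \tilde Y$; since log blow-ups are stable under log base change this is a log blow-up of $X$, and $\tilde f : \tilde X \to \tilde Y$ is proper, so the classical $\tilde f_*$ gives a compatible family that descends to $f_*$ on colimits. Log flat pullback (ii) works identically: log flatness is stable under log base change, so at each level $\tilde f$ is classically flat on suitable strict charts of the correct relative dimension, and the classical pullbacks cohere into $f^*$.

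The main work lies in the Gysin map (iii), $f^!$. Here one uses that a log smooth scheme over $\Spec k$ admits \'etale-locally a toric chart, and that toric singularities can be resolved by log blow-ups. Hence, after replacing $X$ and $Y$ by sufficiently fine log blow-ups, the underlying schemes are smooth and $\tilde f : \tilde X \to \tilde Y$ is a morphism of smooth schemes of relative dimension $d$; then apply the classical Gysin map of~\cite{Fulton}. The principal obstacle is showing these glue: one must verify that for any further refinement of the chosen blow-ups the classical Gysin maps agree, which reduces to the functoriality of Gysin pullback under Cartesian squares of smooth schemes whose horizontal maps are birational. Subtlety arises because log fibre products differ from their underlying schematic fibre products, and the interaction with Gysin pullback has to be tracked carefully; moreover the locally free hypothesis on the characteristic sheaf is what keeps the combinatorics tractable.

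For the vector bundle statement (iv), strictness of $f$ forces every log blow-up of $X$ to come from one of $Y$, and $\tilde X \to \tilde Y$ remains a classical rank-$d$ vector bundle at each stage, so the classical homotopy invariance passes to the colimit. For excision (v), strictness of $i$ and $j$ ensures that log blow-ups of $X$ restrict to log blow-ups of $D$ and $U$; applying the classical excision sequence at each level and passing to the filtered colimit, which is exact, yields the stated sequence. Finally, functoriality in composition for each operation follows by chasing the compatibilities through the filtered index system.
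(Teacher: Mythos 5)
Your skeleton (filtered colimit over log blow-ups, exactness of filtered colimits for excision, cofinality arguments) matches the paper's, but there are two genuine gaps at the points where the real work happens. First, your transition maps are undefined for a general fs log scheme. You only construct Gysin pullbacks between log blow-ups in case (iii), by resolving to smooth underlying schemes --- but that resolution is available only when $X$ is log smooth over $\Spec k$. For an arbitrary (singular, non-reduced, or non-log-smooth) log scheme no log blow-up has smooth underlying scheme, so ``Gysin pullback along log \'etale modifications'' has no classical meaning and your colimit does not exist. The paper's solution is to restrict the index category to blow-ups with \emph{locally free} characteristic sheaf and to build a relative, degree-zero perfect obstruction theory for each such blow-up from the \'etale-local charts $Bl_I(\AA^n)\to\AA^n$ (an lci morphism of smooth schemes), glued over the \'etale site of $X$; this produces $\pi^!$ with no smoothness hypothesis on $\ul{X}$ whatsoever.

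Second, your claim that for proper $f$ the classical $\tilde f_*$ on fibre products ``gives a compatible family that descends to $f_*$ on colimits'' is false as stated, and the paper exhibits the counterexample: for the log blow-up $\pi:\PP^1\to\Spec k_{\NN^2}$ one has $\one_*\one^!([\PP^1])=[\PP^1]$ while $\pi^!\pi_*([\PP^1])=0$, so pushforward does not commute with the transition maps on arbitrary cycles. Likewise ``$\tilde f$ is classically flat on suitable strict charts'' does not follow from log flatness; one needs an \emph{integral} (indeed saturated) lift before the underlying morphism is flat and before the fs fibre product agrees with the schematic one. The paper repairs both operations by (a) passing to integral lifts $\tilde f:\tilde X\to\tilde Y$ via Kato's theorem, and (b) defining the maps only on cycles with \emph{compatible fundamental class} --- a representability result (every class admits such representatives after further blow-up, proved by Noetherian induction) that is entirely absent from your proposal but is the pivot on which well-definedness turns. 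Without it the diagram chases you describe in (i) and (ii) do not close up.
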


These groups agree with the idea given in section 9 of~\cite{MultiplicativityOfTheDoubleRamificationCycle} for schemes log smooth over the trivial log point. As far as the definition exists one could view this as a Chow theory for the valuativisation of~\cite{KKato}. From these one can construct a theory of bivariant classes, following~\cite{Fulton}. Let $f: X \to Y$ be a log morphism, then the \emph{group of $k$ dimensional bivariant classes} $A^k_\dagger(X \xrightarrow{f} Y)$ consists of compatible collections of maps $A_* ^\dagger (Z) \to A_{*-k} ^\dagger (Z \times_Y X)$ for all morphisms $Z \to Y$, with $Z \times_Y X$ the (fs) log fibre product. We will show the following Poincar\'e duality statements, firstly one can identify the theory relative to a point with cycle classes:

\begin{thm}[Theorem~\ref{thm:classes}]

  Let $X$ be a log scheme. Then the group $A_\dagger ^* (X \to \Spec k)$ is canonically isomorphic to $ A^\dagger_* (X)$.

\end{thm}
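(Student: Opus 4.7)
The plan is to construct mutually inverse maps $\Phi$ and $\Psi$ following the template of Poincar\'e duality in a bivariant theory as in~\cite{Fulton}.

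In one direction, I would define $\Phi: A_\dagger^*(X \to \Spec k) \to A_*^\dagger(X)$ by evaluation on the fundamental class of the base. A bivariant class $c \in A_\dagger^k(X \to \Spec k)$ includes as one of its components a map $A_*^\dagger(\Spec k) \to A_{*-k}^\dagger(\Spec k \times_{\Spec k} X) = A_{*-k}^\dagger(X)$; set $\Phi(c) := c([\Spec k])$, where $[\Spec k] \in A_0^\dagger(\Spec k) \cong \ZZ$ is the fundamental class. Grading and functoriality are immediate.

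In the other direction, I would define $\Psi: A_*^\dagger(X) \to A_\dagger^*(X \to \Spec k)$ by exterior product with $\alpha$. Given $\alpha \in A_k^\dagger(X)$ and a log scheme $g: Z \to \Spec k$, set $\Psi(\alpha)_Z(\beta) := \beta \times \alpha$; on representatives this is the proper pushforward to $Z \times X$ of the log flat pullback of $\alpha$ along $V \times X \to X$, where $V \hookrightarrow Z$ is an integral subvariety representing $\beta$. Since $\Spec k$ has trivial log structure, every morphism $V \to \Spec k$ is automatically log flat, so the pullback is well-defined by property~(ii) of the main theorem. The bivariant compatibility axioms reduce in each case to statements on the $Z$-factor, which follow because $\alpha$ is held fixed and the operations commute with base change to $Z \times X$. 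The composition $\Phi \circ \Psi = \text{id}$ is then immediate since $\Psi(\alpha)_{\Spec k}([\Spec k]) = [\Spec k] \times \alpha = \alpha$.

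The main obstacle will be the other composition $\Psi \circ \Phi = \text{id}$: showing that a bivariant class $c$ is uniquely determined by its single value on $[\Spec k]$. I would first reduce, via compatibility of $c$ with proper pushforward along $V \hookrightarrow Z$, to the case where $\beta = [V]$ is the fundamental class of a log integral subvariety. Then the Cartesian square
\[
\begin{tikzcd}
V \times X \arrow[r, "p_X"] \arrow[d, "p_V"] & X \arrow[d] \\
V \arrow[r, "g_V"] & \Spec k
\end{tikzcd}
\]
together with log flatness of $g_V$ and the pullback compatibility axiom identifies $c_V([V])$ with $p_X^* \Phi(c) = [V] \times \Phi(c) = \Psi(\Phi(c))_Z([V])$. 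The delicate point, where the bulk of the real work will lie, is that the fibre product above must be taken in the category of fs log schemes; when this differs from the scheme-theoretic product one must invoke the compatibility of $c$ with log modifications of $Z$, built into the refined groups $A_*^\dagger$ via the log blow-up tower, in order to reduce to a cofinal family of sufficiently well-behaved subvarieties $V$. This is precisely where the refinement of the Chow groups to $A_*^\dagger$ is essential, and I would expect the classical version of the statement (for ordinary Chow groups with $X$ singular) to fail without this refinement.
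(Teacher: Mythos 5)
Your overall architecture (an evaluation map $c \mapsto c([\Spec k])$, with injectivity/surjectivity coming from showing a bivariant class is determined by this single value via the Cartesian square over $\Spec k$) matches the paper's. But there is a genuine gap at the pivot of your argument: the claim that ``since $\Spec k$ has trivial log structure, every morphism $V \to \Spec k$ is automatically log flat'' is false. Log flatness over the trivial log point requires that, locally, the chart maps $V \to \Spec k[Q]$ be flat on underlying schemes; this fails already for the standard log point $\Spec k_{\NN} \to \Spec k$, whose chart is the inclusion of the origin into $\AA^1$. The paper states this explicitly (``although not all log schemes are log flat over the trivial log point, this result continues to hold by embedding $X$ into the enveloping vector bundle $E$''). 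Consequently neither your construction of $\Psi$ via ``log flat pullback along $V \times X \to X$'' nor your identification of $c_V([V])$ with $p_X^*\Phi(c)$ via the pullback compatibility axiom is licensed: the axiom only applies when $g_V$ is log flat, and it typically is not.

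The paper's proof supplies exactly the missing ingredient: for $V$ locally free one strictly embeds $V$ as the zero section of a vector bundle $E$ built from the $\NN$-faces of the log structure, and checks from the local charts $E\mid_U \cong U \times \AA^n \to \AA^n$ that $E$ \emph{is} log flat over the trivial log point. The map $V \to \Spec k$ then factors as a strict regular embedding (the zero section, handled by compatibility with strict Gysin maps) followed by a log flat morphism $E \to \Spec k$ (handled by the pullback axiom), giving $c_V([V]) = 0^!([E] \times \alpha)$ and hence determination of $c$ by $\alpha$. Your closing paragraph worries about the fs fibre product $Z \times_{\Spec k} X$ differing from the scheme-theoretic one, but over the trivial log point this is not actually the obstruction (the characteristic monoids just direct-sum); the real delicacy is the log flatness issue above, and your proof as written does not address it.
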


\noindent and secondly, one has a duality pairing:

\begin{thm}[Corollary~\ref{cor:Poincare}]
  
  Let $g: Y \to Z$ be a log smooth morphism of relative dimension $k$ and $f: X \to Y$ any morphism. Then the composition map $A_\dagger ^* (X \to Y) \to A_\dagger ^{*-k} (X \to Z)$ is an isomorphism.

\end{thm}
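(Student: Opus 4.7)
The plan is to adapt the classical Fulton argument for Poincar\'e duality in bivariant theory (\cite{Fulton}, Theorem~17.4.2) to the logarithmic setting developed in this paper. I would form the log fibre square
\[
\begin{tikzcd}
X \times_Z Y \arrow[r, "\pi_Y"] \arrow[d, "\pi_X"'] & Y \arrow[d, "g"] \\
X \arrow[r, "g \circ f"'] & Z.
\end{tikzcd}
\]
By stability of log smoothness under log base change, $\pi_X$ is log smooth of relative dimension $k$. The morphism $f$ factors through the graph section $\Gamma_f : X \to X \times_Z Y$, characterised by $\pi_X \circ \Gamma_f = \operatorname{id}_X$ and $\pi_Y \circ \Gamma_f = f$. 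This graph is a log regular closed immersion of codimension $k$, since its log conormal sheaf is the pullback along $\Gamma_f$ of the relative log cotangent sheaf of $\pi_X$, which is locally free of rank $k$.

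I would then build a candidate inverse to $\Phi : \alpha \mapsto \alpha \cdot [g]$ as follows. The bivariant base change of the orientation class $[g] \in A_\dagger^{-k}(Y \to Z)$ along $g \circ f : X \to Z$ yields the orientation class $[\pi_X] \in A_\dagger^{-k}(X \times_Z Y \to X)$ of the log smooth morphism $\pi_X$. Since $\Gamma_f$ is a log regular embedding that splits $\pi_X$, it carries a canonical bivariant Gysin class $[\Gamma_f] \in A_\dagger^{k}(X \to X \times_Z Y)$. Set
\[
\Psi : A_\dagger^{p-k}(X \to Z) \to A_\dagger^p(X \to Y), \qquad \gamma \mapsto [\Gamma_f] \cdot g^* \gamma,
\]
where $g^* \gamma \in A_\dagger^{p-k}(X \times_Z Y \to Y)$ denotes the bivariant pullback of $\gamma$ along $g$.

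Granting the key identity $[\Gamma_f] \cdot [\pi_X] = 1_X$ in $A_\dagger^0(X \to X)$, the verifications that $\Phi \circ \Psi = \operatorname{id}$ and $\Psi \circ \Phi = \operatorname{id}$ are formal manipulations using associativity of bivariant composition, compatibility of bivariant base change with composition, and the naturality of $[g]$ under pullback (so that the base change of $[g]$ along $g \circ f$ equals $[\pi_X]$). These are exactly the steps which, in the classical setting, reduce Poincar\'e duality for smooth morphisms to the self-intersection formula for the graph.

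The main obstacle is therefore the key identity $[\Gamma_f] \cdot [\pi_X] = 1_X$. Morally it is tautological, because $\pi_X \circ \Gamma_f = \operatorname{id}_X$ and the orientation class of an identity morphism is $1_X$; but $\Gamma_f$ is a section of a log smooth morphism between log schemes that need not be log smooth over $\Spec k$, so item (iii) of the Theorem stated in the introduction (existence of Gysin maps for log smooth $X$ and $Y$) does not apply directly. I would establish the identity by working Zariski-locally on $X$: choosing a suitable log \'etale chart, the map $\pi_X$ locally takes the form of a trivial log smooth family, so $\Gamma_f$ becomes the zero section of a log vector bundle, at which point item (iv) (strict vector bundle invariance) of the main theorem together with the deformation to the log normal cone construction of~\cite{LeosPaper} recalled in the final section of the paper yields the identity.
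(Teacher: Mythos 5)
Your proposal is correct and is essentially the paper's own argument: the paper offers no detailed proof beyond asserting that ``a certain amount of algebra copies across from the classical case'' and recording that the inverse is $\Gamma_f \cdot g^*(-)$, which is precisely the map $\Psi$ you construct following Fulton's classical Poincar\'e duality argument. You in fact supply more than the paper does by isolating the key identity $[\Gamma_f]\cdot[\pi_X]=1_X$ for the graph section of the base-changed log smooth projection and sketching its local verification, which is the only genuinely logarithmic point in the argument.
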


One important application of this is to describe the bivariant theory relative to the \emph{Artin fan} of a log scheme $X$. The Artin fan $\cA_X$ of a log scheme $X$ was introduced in~\cite{BoundednessOfTheSpaceOfStableLogarithmicMaps} Section 3 and has the property that it is log \'etale over the trivial log point, and has a strict morphism $X \to \cA_X$. Since it is log \'etale over a point we can apply the above theorem to deduce the following corollary:

\begin{cor}[Example~\ref{cor:combinatorialstructure}]

  There are canonical isomorphisms
  \[A_* ^\dagger (X) \cong A^* _\dagger (X \to \cA_X) \text{\quad and \quad} A^* _\dagger (\cA_X \to \Spec k) \cong A^* _\dagger (\cA_X \to \cA_X)\]
  This gives the group $A_* ^\dagger (X)$ the structure of a module over the Chow cohomology ring $A^* _\dagger (\cA_X \to \cA_X)$.
  
\end{cor}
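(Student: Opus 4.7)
The plan is to deduce both isomorphisms as essentially formal consequences of the Poincar\'e duality stated in Corollary~\ref{cor:Poincare}, exploiting the fact that $\cA_X$ is log \'etale, and hence log smooth of relative dimension zero, over the trivial log point. The module structure is then obtained by transport through these isomorphisms, using the standard bivariant composition pairing.

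For the first isomorphism I would take $Y=\cA_X$, $Z=\Spec k$, and let $f$ be the canonical strict morphism $X \to \cA_X$. Because $g:\cA_X \to \Spec k$ is log smooth of relative dimension $k=0$, Corollary~\ref{cor:Poincare} gives an isomorphism
\[ A^*_\dagger(X \to \cA_X) \xrightarrow{\ \sim\ } A^*_\dagger(X \to \Spec k). \]
Composing with Theorem~\ref{thm:classes}, which canonically identifies $A^*_\dagger(X \to \Spec k)$ with $A^\dagger_*(X)$, produces the first claimed isomorphism. The second isomorphism follows from the same argument applied with $X=Y=\cA_X$ and $f=\mathrm{id}_{\cA_X}$.

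For the module structure I would invoke the bivariant composition pairing
\[ A^i_\dagger(X \to \cA_X) \otimes A^j_\dagger(\cA_X \to \cA_X) \to A^{i+j}_\dagger(X \to \cA_X), \]
available in the bivariant formalism developed in the paper, modelled on~\cite{Fulton}. Specialising to $X=\cA_X$ with $f=\mathrm{id}$ realises $A^*_\dagger(\cA_X \to \cA_X)$ as a graded ring, and the pairing above exhibits $A^*_\dagger(X \to \cA_X)$ as a graded module over it; transporting along the isomorphism $A^\dagger_*(X) \cong A^*_\dagger(X \to \cA_X)$ then endows $A^\dagger_*(X)$ with the asserted module structure.

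The one point requiring care, and the subtlest part of the argument, is checking that this transported structure is intrinsic, i.e.\ that the identifications arising from Corollary~\ref{cor:Poincare} are compatible with the bivariant composition product. I would handle this by noting that the duality isomorphism in Corollary~\ref{cor:Poincare} is itself implemented as bivariant multiplication against the fundamental class of the log smooth morphism $g$; its compatibility with further composition of bivariant classes is then just an instance of the associativity of the bivariant product established earlier in the paper, and no further verification is needed.
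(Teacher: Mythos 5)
Your proposal is correct and follows essentially the same route as the paper: the paper likewise deduces both isomorphisms by applying Corollary~\ref{cor:Poincare} to the strict map $X \to \cA_X$, using that $\cA_X$ is log \'etale (hence log smooth of relative dimension zero) over the trivial log point, and then combines this with Theorem~\ref{thm:classes} and the bivariant composition product to obtain the module structure. Your closing remark on compatibility of the duality isomorphism with the bivariant product is more careful than the paper's one-line treatment, but it does not change the argument.
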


\subsection{Log normal cone constructions}

The ring $A^* _\dagger (\cA_X \to \cA_X)$ contains combinatorial data about the log structure on $X$, and correctly described should be a generalisation of the polytope algebra of McMullen, following~\cite{FultonSturmfels}.

Elements of $A ^* (X \to Y)$ are constructed via finding an embedding of the normal cone $\cC_{X/Y}$ into a vector bundle stack $\EE_X$ using tools developed by Manolache in~\cite{VirtualPullbacks} and~\cite{VirtualPushforward}. The log normal cone $\cC^\ell_{\bullet/\bullet}$ was constructed by Herr in~\cite{LeosPaper} and one can view it as a log scheme over $X$ by insisting that map to $X$ be strict. This is the natural extension of the normal cone to log geometry constructed from the log cotangent complex $\Log{\bullet/\bullet}$, described by~\cite{LogarithmicGeometryAndAlgebraicStacks}. Unfortunately the log cotangent complex is not as well behaved as the cotangent complex, this presents itself here as the fact that if $\pi: \tilde{X} \to X$ is a log \'etale morphism and $X \to Y$ is any morphism then $\pi^* (\cC^\ell_{X/Y}) \not \cong \cC^\ell_{\tilde{X}/Y}$, it is only a substack.

One can view this as the fact that $\pi$ is not itself flat and so poorly suited to taking fibre products. Correcting this on the level of log schemes requires a foundation of derived log schemes and Chow groups for these, a formidable challenge. Instead one could correct it on the level of cycles. We do this using the Gysin maps constructed above, combined with~\cite{LeosPaper} Remark 2.14.

\begin{thm}[Theorem~\ref{thm:normalcone}]
  
  Let $\pi:\tilde{X} \to X$ be a log \'etale map with $\tilde{X}$ and $X$ locally free, and $f: X \to Y$ a log morphism with $Y$ log smooth over the trivial log point. Then let $i: \cC^\ell_{\tilde{X}/Y} \to \pi^* \cC^\ell_{X/Y}$ be the inclusion. There is an equality $i_* [\cC^\ell_{\tilde{X}/Y}] = \pi^![\cC^\ell_{X/Y}]$.
  
\end{thm}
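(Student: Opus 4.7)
The plan is to reduce the equality to a purely local computation via strict étale descent. Both $i_*[\cC^\ell_{\tilde{X}/Y}]$ and $\pi^![\cC^\ell_{X/Y}]$ are compatible with strict étale base change on $\tilde{X}$, so one may pass to strict étale charts on which the log structures of $X$ and $\tilde X$ are given by toric monoids. Under the locally free hypothesis such charts exist, and a log étale morphism between locally free log schemes is, after passing to a strict étale cover, pulled back from a toric morphism corresponding to a subdivision of cones.

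From this local model I would perform a further reduction to the case where $\pi$ corresponds to a single elementary subdivision, for instance the star subdivision of one cone, corresponding geometrically to a log blow-up of a stratum. This uses functoriality: the pushforwards $i_*$ compose on the left-hand side, while the Gysin pullback $\pi^!$ composes by the functoriality of the construction in Example~\ref{eg:SmoothGysin}, together with iterative use of Remark 2.14 of \cite{LeosPaper}. Any log étale morphism between locally free log schemes admits, after further refinement, a factorisation into such elementary steps.

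In the resulting local toric model, both sides can be computed explicitly. The inclusion $i$ of Remark 2.14 of \cite{LeosPaper} can be read off the comparison between $\Log{\tilde{X}/Y}$ and $\pi^* \Log{X/Y}$, and picks out a distinguished union of components of $\pi^* \cC^\ell_{X/Y}$ with combinatorially determined multiplicities coming from lattice indices of the subdivided cones. On the other side, $\pi^![\cC^\ell_{X/Y}]$ is obtained from the construction of Example~\ref{eg:SmoothGysin}, which in the toric setting produces a cycle supported on the preimage with multiplicities dictated by the combinatorics of the subdivision. The theorem then reduces to matching these two cycles.

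The main technical obstacle I anticipate is in the matching of multiplicities: showing that the coefficients assigned by $\pi^!$ to the components of $\cC^\ell_{\tilde{X}/Y}$ inside $\pi^* \cC^\ell_{X/Y}$ coincide with the intrinsic multiplicities inherited via $i_*$, and that any excess components of $\pi^* \cC^\ell_{X/Y}$ outside the image of $i$ contribute zero. This is essentially a combinatorial statement about refinements of polyhedral cones interacting with the tropicalisation of the log normal cone, and it is the step at which the explicit compatibility between the log Gysin formalism built earlier in the paper and the construction of $\cC^\ell_{\bullet/\bullet}$ in \cite{LeosPaper} must be carefully pinned down.
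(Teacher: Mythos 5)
Your proposal is a reduction strategy rather than a proof: the step you flag as ``the main technical obstacle'' --- matching the multiplicities that $\pi^!$ assigns to components of $\pi^*\cC^\ell_{X/Y}$ against the intrinsic multiplicities of $\cC^\ell_{\tilde{X}/Y}$, and showing that components outside the image of $i$ contribute zero --- \emph{is} the content of the theorem, and you leave it unresolved. Even granting the reductions, they are not free: factoring a general log \'etale modification of locally free log schemes into elementary star subdivisions only works after passing to a further common refinement, so you would additionally need a cancellation statement for the extra maps this introduces, and you would need the intermediate stages to remain locally free. None of this is set up in your sketch, so as written the argument does not close.

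The paper takes a different and much shorter route that avoids the combinatorial computation entirely. One strictly embeds $X$ into a log scheme $U$ log smooth over $Y$ (hence locally free near the image of $X$), lifts the modification to $\pi_U: \tilde{U} \to U$, which is lci with vector bundle stack normal cone $\cN^\ell_{\tilde{U}/U}$, and then applies Vistoli's rational equivalence
\[ [\cC^\ell_{\cC^\ell_{X/U} \times_U \tilde{U} / \cC^\ell_{X/U}}] = [\cC^\ell_{\cN^\ell_{\tilde{U}/U} \times_U X / \cN^\ell_{\tilde{U}/U}}] \]
in $A_*(\cC^\ell_{X/U} \times_U \cN^\ell_{\tilde{U}/U})$; pulling back along the zero section of $\cN^\ell_{\tilde{U}/U}$ gives $\pi^![\cC^\ell_{X/U}] = i_*[\cC^\ell_{\tilde{X}/\tilde{U}}]$, and the statement relative to $Y$ follows by descending to the quotient cone, using that the log cotangent bundle of $U/Y$ pulls back to that of $\tilde{U}/Y$ because $U \to Y$ is log smooth and $\tilde{U} \to U$ log \'etale. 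The multiplicity bookkeeping you were worried about is exactly what Vistoli's double deformation space handles in one stroke; if you want to salvage your local approach, you would in effect be re-proving that equivalence chart by chart.
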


This produces the following corollary:

\begin{cor}[Corollary~\ref{cor:conedefined}]

  The virtual fundamental class construction applied to $\cC^\dagger _{X/Y}$ produces a well defined class in $A_* ^\dagger (X)$.
  
\end{cor}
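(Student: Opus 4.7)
The plan is to deduce this corollary directly from Theorem~\ref{thm:normalcone} combined with the standard Gysin-pullback definition of the virtual fundamental class. An element of $A_*^\dagger(X)$ is, by construction, a compatible family of cycle classes on the system of locally free log blow-ups $\pi: \tilde X \to X$, with compatibility meaning that the class on $\tilde X$ equals $\pi^!$ of the class on $X$. The corollary therefore reduces to showing that, when each $\tilde X$ is assigned the ordinary virtual fundamental class $[\cC^\ell_{\tilde X/Y}]^{\mathrm{vir}}$, this system satisfies $\pi^![\cC^\ell_{X/Y}]^{\mathrm{vir}} = [\cC^\ell_{\tilde X/Y}]^{\mathrm{vir}}$.

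The verification of this identity splits into two pieces. First, on each locally free $\tilde X$ one has the vector bundle stack $\EE_{\tilde X}$ built from the (truncated) log cotangent complex $\Log{\tilde X/Y}$, into which $\cC^\ell_{\tilde X/Y}$ embeds as a closed subcone, and the virtual class is defined by $[\cC^\ell_{\tilde X/Y}]^{\mathrm{vir}} = 0^!_{\EE_{\tilde X}}[\cC^\ell_{\tilde X/Y}]$. Because $\Log{-/Y}$ is functorial along log étale maps, the stack $\EE_{\tilde X}$ is canonically identified with $\pi^* \EE_X$, so $0^!_{\EE_{\tilde X}}$ agrees with the Gysin pullback of $0^!_{\EE_X}$ along $\pi$. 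Second, Theorem~\ref{thm:normalcone} gives $i_*[\cC^\ell_{\tilde X/Y}] = \pi^![\cC^\ell_{X/Y}]$ inside the $\dagger$-Chow group of $\pi^*\cC^\ell_{X/Y}$, where $i$ is the inclusion of the honest log normal cone of $\tilde X$ into the pulled-back cone. Applying $0^!_{\pi^* \EE_X}$ to both sides and using commutativity of $0^!$ with the proper pushforward $i_*$ (an inclusion of closed subcones in the same vector bundle stack) and of $0^!$ with the Gysin pullback $\pi^!$ yields the required compatibility.

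The main obstacle is establishing rigorously the commutation of the log étale Gysin map $\pi^!$ constructed in this paper with the zero-section pullback $0^!_{\EE_X}$ on the vector bundle stack. Since $\pi$ is not flat at the level of underlying schemes, this cannot be imported from classical intersection theory; instead I would derive it from the bivariant formalism developed earlier, in particular by interpreting both $0^!$ and $\pi^!$ as elements of appropriate bivariant groups and invoking the Poincaré duality of Corollary~\ref{cor:Poincare} together with the vector-bundle pullback isomorphism of the $\dagger$-Chow theory. Once this commutativity square is in place, together with the standard fact that $0^!$ commutes with proper pushforward, the argument sketched above is formal and the resulting compatible family produces the desired class in $A_*^\dagger(X)$.
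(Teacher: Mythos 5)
Your argument is correct and is essentially the one the paper intends (the paper states this corollary without a written proof, as an immediate consequence of Theorem~\ref{thm:normalcone}): the identity $i_*[\cC^\ell_{\tilde{X}/Y}] = \pi^![\cC^\ell_{X/Y}]$, combined with the fact that the obstruction theory pulls back along the log \'etale map $\pi$ and the commutation of the zero-section Gysin map with $i_*$ and $\pi^!$, shows that the virtual classes computed on the various locally free blow-ups are identified under the transition maps defining $A_*^\dagger(X)$. One small correction of framing: $A_*^\dagger(X)$ is a \emph{colimit} over $\catname{Fr Bl}_X$, not an inverse limit, so what is needed (and what your computation actually delivers) is that the representatives on different blow-ups agree after Gysin pullback to a common refinement, rather than a compatible family in the projective sense.
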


We use this work finally to both recast Proposition 3.6.2 of~\cite{LogarithmicGromovWittenTheoryWithExpansions} in terms of these Gysin maps and to extend the construction of log Gysin maps found in Section 3 of~\cite{LeosPaper} substantially.

\subsection{Acknowledgements}

The initial part of this work was carried out as a PhD student in Cambridge under Mark Gross, and continued both as a Postdoctoral Researcher at NCTS Taipei and a Visiting Assistant Proffesor in Boston College. I must thank Mark Gross for introducing me to log geometry and teaching me the details of the subject. The theory was streamlined through discussion between myself and Leo Herr, and I am greatly indebted to him. I also owe a debt of gratitude to David Holmes for discussing these ideas with me in Leiden, and Michel van Garrel for organising a wonderful conference at MFO Oberwolfach where I discussed some of these ideas with him, Dhruv Ranganathan and Jonathan Wise. Navid Nabijou has been a constant source of inspiration. Finally many intimate details of log structures were unveiled to me by Dan Abramovich.

\section{The cycle group}

Given a log scheme $X$ we can at a first attempt just take the Chow group of $\ul{X}$. Given a modification $\tilde{X} \to X$ with both $\ul{\tilde{X}}$ and $\ul{X}$ smooth we can follow Shokurov in~\cite{Shokurov} and Holmes-Pixton-Schmidt in~\cite{MultiplicativityOfTheDoubleRamificationCycle} and construct compatible Gysin maps by taking Gysin pullbacks. We begin with a motivating lemma from~\cite{Niziol}:

\begin{lem}

  Let $X$ be a log scheme log smooth over the trivial log point. Then $\ul{X}$ is smooth if and only if the stalks of the characteristic sheaf are isomorphic to $\NN ^r$.
  
\end{lem}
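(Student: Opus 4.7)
The plan is to invoke Kato's structure theorem for log smooth morphisms over the trivial log point: \'etale locally around a point $x \in X$ there is a chart $X \to \Spec k[P]$, with $P = \CharSheaf{X,x}$ the (sharp) stalk of the characteristic sheaf, such that the induced morphism on underlying schemes $\ul{X} \to \Spec k[P]$ is a classical smooth morphism. This reduces the question about the smoothness of $\ul{X}$ at $x$ to a question about the smoothness of $\Spec k[P]$ at the distinguished torus-fixed point $p_0$ (the origin, cut out by the ideal of non-units of $P$).

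For the easy direction, suppose the characteristic stalks are free, say $P \cong \NN^r$. Then the local model $\Spec k[P] \cong \AA^r$ is smooth over $\Spec k$, and since $\ul{X}$ is classically smooth over $\Spec k[P]$ by Kato, $\ul{X}$ is smooth over $\Spec k$ \'etale locally, hence globally.

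For the converse, assume $\ul{X}$ is smooth and again pick the local chart $\ul{X} \to \Spec k[P]$ around $x$. This map is classically smooth and therefore open, and after possibly shrinking $X$ I may arrange it to be surjective onto an open neighbourhood of $p_0$ (here I use that $p_0$ lies in the closure of every nonempty torus orbit of $\Spec k[P]$). Smoothness descends along smooth surjections, so $\Spec k[P]$ is smooth at $p_0$. The standard characterization of smooth affine toric varieties then forces $P$ to be a free monoid, i.e.\ $P \cong \NN^r$ where $r$ is the rank of the group completion of $P$.

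The main obstacle here is not conceptual but bookkeeping: one must pin down which version of the chart monoid appears in Kato's theorem (the sharp characteristic monoid, rather than the full $\cM_{X,x}$) and ensure the \'etale chart can be chosen surjective near $p_0$. Everything else is a direct translation between monoid combinatorics and toric geometry.
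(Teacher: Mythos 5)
Your proof is correct and follows essentially the same route as the paper: both directions go through Kato's chart $U \to \Spec k[P]$, neat at the chosen point, reduce the question to $\Spec k[P]$ near the torus-fixed point, and invoke the standard toric fact that smoothness there forces $P \cong \NN^r$. The only difference is how smoothness is transported from $U$ down to $\Spec k[P]$: you use openness of smooth maps plus fppf descent of smoothness, whereas the paper splits the relative cotangent sequence of $U \to \Spec k[P] \to \Spec k$ to conclude that $\Omega_{\Spec k[P]/\Spec k}$ is locally free near the origin --- two phrasings of the same descent step.
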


\begin{proof}

  Suppose that $\ul{X}$ is smooth. Then we have a chart for the log structure over an \'etale open subset $U \subset X$ with at least one point mapping to the origin of $\Spec k[P]$:
  \begin{figure}[h]
    \centering
    \begin{tikzcd}
      U \arrow{rd}{ch} \arrow[bend left=20]{rrd}{ch} & & \\
      & \Spec k[P] \arrow{r} \arrow{d} & \Spec k[P] \arrow{d} \\
      & \Spec k \arrow{r} & \Spec k
    \end{tikzcd}
  \end{figure}

  \noindent by assumption $U$ is smooth and the map $ch: U \to \Spec k[P]$ is smooth. Taking the cotangent exact sequence:
  \[ 0 \to ch^* \Omega_{\Spec k[P]/\Spec k} \to \Omega_{U/\Spec k} \to \Omega_{U/\Spec k[P]} \to 0 \]
  we see that $\Omega_{\Spec k[P]/\Spec k}$ is free in a neighbourhood of the origin. But this implies that $P \cong \NN^r$ is free.

  Conversely if the stalks of the characteristic sheaf of $X$ are free then we have an \'etale cover of $\ul{X}$ by smooth schemes. Therefore $\ul{X}$ is smooth.

\end{proof}

This will be enough to construct the desired Gysin maps, having locally free stalks as above acts to specify compatible maps to smooth schemes. Let us notate such schemes. 

\begin{dfn}

  Let $X$ be a log scheme such that at every point the stalk of the characteristic sheaf is isomorphic to $\NN^r$, where $r$ may vary from point to point. We call such an $X$ a \emph{locally free} log scheme.
  
\end{dfn}

As a word of warning, in the sense of cones such schemes would be called ``smooth'', but the phrase ``smooth log scheme'' is evidently indicative of something else, while free cones also includes simplicial cones. I decided to stick to this direct description of the monoids. Our first step is to construct Gysin maps for certain morphisms of log schemes, locally free blow-ups morphisms of~\cite{FKato} exposited in the introduction.

\begin{dfn}

  A \emph{log blow-up} $\pi: \tilde{X} \to X$ is the morphism induced by blowing up a sheaf of log ideals on $X$. When it is finite it is degree one, in particular if $X$ is log smooth over the trivial log point $\pi$ is birational. We call a log blow-up $\pi: \tilde{X} \to X$ with $\tilde{X}$ and $X$ locally free a \emph{locally free log blow-up}.

  A \emph{log modification} $\pi: \tilde{X} \to X$ is a log morphism for which there exists a log blow-up $Z \to \tilde{X}$ such that the morphism $Z \to X$ is also a log blow-up.
  
\end{dfn}

By Theorem 3.16 of~\cite{FKato} any morphism $f: X \to Y$ may be lifted to an integral morphism $\tilde{f} : \tilde{X} \to \tilde{Y}$ after passing to log modifications $\tilde{X} \to X$ and $\tilde{Y} \to Y$. In fact an inspection of the proof shows that this can be done with $\tilde{Y} \to Y$ a log blow-up.

Studying only log blow-ups is sufficient for constructing colimits over all log modifications, they are by definition cofinal within the system of log modifications. Since they are degree one when finite they are good analogues of the birational maps used in the $b$-Chow definition.

\begin{con}

  Let $\pi: \tilde{X} \to X$ be a locally free log blow-up. We claim that this has a canonical perfect obstruction theory of relative dimension 0. Let $U \to X$ be an \'etale open set around $x \in X$ on which we have a chart for the log structure. After potentially restricting $U$ we have that this chart is of the form $U \to \AA^n$, where $n$ is the rank of $\CharSheaf{X}$ at $x$. The morphism $\pi$ is the fibre over a blow-up $\pi_{\AA^n}: Bl_I (\AA^n) \to \AA^n$, with $Bl_I (\AA^n)$ also smooth. Therefore $\pi_{\AA^n}$ is lci and has a canonical rank zero perfect obstruction theory.

  We now glue these local models to an \'etale atlas for the obstruction theory. To be precise suppose that $U$ is an \'etale open set with a chart $\AA^n$ and $V$ is a smaller \'etale open set with a chart $\AA^m$. There is an induced generisation map on charts of the form $\phi: \AA^n \to \AA^m$, given by projection to a subspace, together with an induced map on the blow ups, $\phi_{Bl}: Bl_I (\AA^n) \to Bl_I (\AA^m)$. But over the image of $V$ the map $\phi_{Bl}$ is the pullback of $\phi$. Since $\phi$ is smooth the two obstruction theories glue canonically.
  
  This therefore defines a vector bundle stack on the \'etale topology of $X$. I suspect that this descends to the Zariski topology, but in any case by~\cite{ChangLi} Theorem/Definition 3.7 there is a well defined theory of \'etale perfect obstruction theories. We write the associated Gysin map $\pi^!$ which is in degree zero, it preserves dimension.
  
\end{con}

\begin{rmk}

  When $\tilde{X}$ and $X$ are locally free and log smooth over the trivial log point then $\ul{\tilde{X}}$ and $\ul{X}$ are also smooth. Therefore $\ul{\pi}$ induces a Gysin map agreeing with the above construction. Thus these are an extension of the groups constructed in~\cite{MultiplicativityOfTheDoubleRamificationCycle}, restricted only to log blow-ups.
  
\end{rmk}

We need to check that this construction is functorial under further blow-ups.

\begin{lem}

  Let $\pi': X'' \to X'$ and $\pi: X' \to X$ be locally free log blow-ups. Then $(\pi ' \cdot \pi) ^! = \pi ^{'!} \cdot \pi^!$.
  
\end{lem}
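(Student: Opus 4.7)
The plan is to reduce the statement to the classical functoriality of Gysin pullback for compositions of lci morphisms, and then to glue the resulting identity over étale charts.

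The first step is to observe that $\pi \cdot \pi'$ is itself a locally free log blow-up. This holds because the composition of two log blow-ups can be realised as the blow-up of a single combined sheaf of log ideals, and all three schemes $X$, $X'$, $X''$ are locally free by hypothesis. Consequently $\pi \cdot \pi'$ carries its own canonical étale perfect obstruction theory and Gysin map $(\pi \cdot \pi')^!$ by the construction just given.

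Next I work étale-locally on $X$ using the toric charts introduced above. Around a point of $X$, choose an étale chart $U \to \mathbb{A}^n$; after shrinking $U$ and refining over $X'$ and $X''$, I may arrange that $\pi|_U$ is pulled back from a blow-up $Bl_I(\mathbb{A}^n) \to \mathbb{A}^n$ and $\pi'$ from a further blow-up $Bl_{I'}(Bl_I(\mathbb{A}^n)) \to Bl_I(\mathbb{A}^n)$, with all three toric models smooth. By construction, all three obstruction theories in the identity we want are pulled back from the relative cotangent complexes of these toric maps, and these sit in the standard cotangent triangle for a composition of lci morphisms between smooth schemes.

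Applying the classical functoriality of Gysin pullback for compositions of lci morphisms, as in~\cite{Fulton} Chapter 6 and in the étale perfect obstruction theory framework of~\cite{ChangLi} and~\cite{VirtualPullbacks}, yields the identity $(\pi \cdot \pi')^! = \pi'^! \cdot \pi^!$ on each chart. The main obstacle is checking that these chartwise identities glue to a global one; this follows from the same argument used to glue the obstruction theory for $\pi^!$ itself, since both sides of the identity behave functorially under the smooth generization maps between overlapping toric charts, and the compatibility of virtual pullbacks is preserved under smooth base change.
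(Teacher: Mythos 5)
Your proposal is correct and follows essentially the same route as the paper: reduce via \'etale charts to the toric models $Bl_J(\AA^n) \to Bl_I(\AA^n) \to \AA^n$ and invoke compatibility (classical functoriality) of the obstruction theories for this composite of lci morphisms between smooth schemes. The only difference is that you spell out the gluing and the fact that the composite is itself a log blow-up, which the paper leaves implicit.
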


\begin{proof}

  After passing to \'etale open sets we may assume that we have charts for the log structures, $Bl_J \AA^n \to Bl_I \AA^n \to \AA^n$. Our claim is equivalent to the compatibility of the obstruction theories for this sequence of maps, and they are evidently compatible.
  
\end{proof}

We can then define the Chow groups analogously to~\cite{MultiplicativityOfTheDoubleRamificationCycle} as being a colimit over all log blow-ups. 

\begin{dfn}

  Let $X$ be a log scheme. The category $\catname{Fr Bl}_X$ is the (non-empty) category whose objects are log blow-ups $\pi: \tilde{X} \to X$ with $\tilde{X}$ locally free and whose morphisms are further log blow-ups over $X$. The above construction gives a contravariant functor $A_* (-): \catname{Fr Bl}_X \to \catname{Gr Ab}$. The \emph{log Chow groups of $X$}, $A_*^ \dagger(X)$, are the colimit over $A_* (-)$.
  
\end{dfn}

\begin{rmk}

  As in~\cite{MultiplicativityOfTheDoubleRamificationCycle} the category of log blow-ups is filtered, and these groups may be described in terms of cycles on some blow-up, modulo identification under (Gysin) pullback.

  Using~\ref{cor:pushforward} and~\ref{NoetherianInduction} there is a canonical map from this to the inverse limit over all log blow-ups, with transition maps given by pushforward. This is the analogue of the embedding of the small $b$-Chow ring into the large $b$-Chow group. This inverse limit in the context of logarithmic geometry was suggested as an object of study by~\cite{LogarithmicGromovWittenTheoryWithExpansions} in Proposition 3.6.2. We will see that relations in our groups imply the relations there. 
  
\end{rmk}

Calculating these colimits is at least as hard as calculating Chow groups. There are two examples where we can directly describe the groups, for toric varieties described in~\cite{FultonSturmfels} Theorem 4.2 and the discussion before it, applying Poincar\'e duality and cofinality of smooth blow-ups to move from the Chow cohomology groups used there to the Chow groups used here. The authors describe these groups as the polytope algebra of McMullen,~\cite{McMullen}. The second case is when the log structure on $X$ is Deligne--Faltings rank one, in the notation of~\cite{Chen}. Here the category $\catname{Fr Bl}_X$ is the singleton $X$, with only the identity map.

Let us calculate a non-trivial example of these Gysin maps and see that there are some subtleties:

\begin{eg}

  Let $\Spec k_{\NN^2}$ be the origin in $\AA^2$ with its induced log structure. The blow up at the origin induces a locally free log blow-up $\PP^1 \to \Spec k_{\NN^2}$ where $\PP^1$ has globally non-trivial log structure. The only possible class in dimension zero is again a multiple of a point. Calculating the normal cone we see that we are intersecting two copies of the zero section inside $\cO(1)$ on $\PP^1$, so the multiplicity is 1.

\end{eg}

There is a problem with this, unless $X$ is log flat over the trivial log point formation of the fundamental class does not commute with this pullback. Under sufficiently nice conditions however this does commute. Our first step is to have some notation for when all the log blow-ups of a given log scheme $X$ have the same dimension.

\begin{dfn}

  Let $X$ be a log scheme such that for the generic point of every log strata $\zeta$ the sum $ \rk \CharSheaf{\zeta} - \codim \zeta$ is constant on each irreducible component. This condition is stable under log flat morphisms and we call such a log scheme \emph{of pure log dimension}. The negation of this is \emph{not} stable under log flat morphisms.

  If in addition every generic point of a component of $X$ has log structure $0$ or $\NN$ then we say that $X$ is $\emph{maximal pure log dimension $\dim \ul{X}$}$. For every further log blow-up of $X$, $\tilde{X} \to X$ we have that $\dim \ul{\tilde{X}} = \dim \ul{X}$.

  We say that a locally free log blow-up $\pi: \tilde{X} \to X$ is \emph{compatible with the fundamental class (of $X$)} if $\pi^! [X] = [\tilde{X}]$. We say that $X$ \emph{has compatible fundamental class} if $X$ is locally free and every locally free blow-up is compatible with the fundamental class.
  
\end{dfn}

The idea is that if $X$ has maximal pure log dimension then the Gysin pullback of $[X]$ to any log blow-up is again top dimensional and so possibly equal to the fundamental class. The following lemma proves that this intuition is indeed correct.

\begin{lem}

  Let $X$ be a locally free log scheme of maximal pure log dimension. Then $X$ has compatible fundamental class.
\label{compatiblefundamentalclass}  
\end{lem}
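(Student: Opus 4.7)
The plan is to compare $\pi^![X]$ and $[\tilde X]$ on the top-dimensional stratum of $\tilde X$, exploiting that $\pi$ is an isomorphism away from strata of log rank $\geq 2$. Since $X$ has maximal pure log dimension, $\dim \ul{\tilde X} = \dim \ul X$, and since the canonical obstruction theory for $\pi$ has relative dimension zero, both $\pi^![X]$ and $[\tilde X]$ lie in $A_{\dim \ul X}(\tilde X)$. This group is freely generated by the top-dimensional irreducible components of $\tilde X$, so the equality reduces to matching multiplicities at the generic point of each top component.

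To find a sufficiently large open where $\pi$ restricts to an isomorphism, I would pass to a strict \'etale chart $ch : X \to \AA^n$ with $n = \rk \CharSheaf{X, x}$. Along this chart $\pi$ is pulled back from a toric blow-up $\pi_{\AA^n}: Bl_I \AA^n \to \AA^n$, and since no subdivision of a cone touches its faces of dimension $\leq 1$, the map $\pi_{\AA^n}$ is an isomorphism over the union of torus orbits of codimension $\leq 1$, i.e.\ over the locus where the log rank is $\leq 1$. Strictness of $ch$ transports this to the Zariski open $U = \{x \in X : \rk \CharSheaf{X,x} \leq 1\} \subset X$, which is open by upper semicontinuity of log rank. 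The maximal pure log dimension hypothesis is precisely the statement that every generic point of a top component of $X$ lies in $U$, so $\pi^{-1}(U)$ contains every top generic point of $\tilde X$.

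Over this iso locus I expect the Gysin pullback to reduce to ordinary pullback: the normal cone $C_{\pi^{-1}(U)/U}$ is just $\pi^{-1}(U)$ sitting as the zero section, and the local obstruction bundle pulled back from $Bl_I \AA^n \to \AA^n$ vanishes over the codim-$\leq 1$ torus orbit locus, so the \'etale-locally defined perfect obstruction theory restricts to the trivial one. Using that Manolache's virtual pullback is compatible with \'etale base change, together with the gluing machinery of \cite{ChangLi}, one obtains $\pi^![X]|_{\pi^{-1}(U)} = [X]|_U = [\tilde X]|_{\pi^{-1}(U)}$. Matching multiplicities at every top generic point of $\tilde X$ then forces $\pi^![X] = [\tilde X]$ in $A_{\dim \ul X}(\tilde X)$.

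The hardest part will be this last identification: checking that the obstruction theory assembled \'etale-locally via \cite{ChangLi} really does restrict to the trivial theory on $\pi^{-1}(U)$, and that virtual pullback commutes with this restriction, so that $\pi^!$ there agrees with plain pullback along the underlying isomorphism. Everything else is dimensional book-keeping and the combinatorics of toric subdivisions, but this compatibility is where the relative dimension zero rigidity of the construction actually has to be used.
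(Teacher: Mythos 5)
Your reduction to the locus $U \subset X$ where the characteristic sheaf has rank $\leq 1$ is sound as far as it goes: $U$ is open, every coherent log ideal is locally principal there, so $\pi$ is indeed an isomorphism over $U$, and the maximal pure log dimension hypothesis does place every generic point of every component of $X$ inside $U$. The gap is the step ``so $\pi^{-1}(U)$ contains every top generic point of $\tilde X$.'' A locally free log blow-up of a scheme of maximal pure log dimension can acquire top-dimensional components that are contracted into the rank $\geq 2$ locus $X \setminus U$. For example, take $\ul{X} = \AA^1 = \Spec k[x]$ with the chart $\NN^2 \to k[x]$ sending $e_1 \mapsto x$ and $e_2 \mapsto 0$: this is locally free, the generic stalk is $\NN$, and $\rk \CharSheaf{\zeta} - \codim \zeta = 1$ on both strata, so $X$ is of maximal pure log dimension. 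Blowing up the log ideal $(e_1,e_2)$ yields $\tilde X = \AA^1 \cup_{pt} \PP^1$, where the exceptional $\PP^1$ is a top-dimensional component lying entirely over the origin, a point of rank $2$. Your argument fixes the coefficient of the strict transform $\AA^1$ in $\pi^![X]$ but says nothing about the coefficient of $[\PP^1]$, and since $A_{\dim \ul{X}}(\tilde X)$ is free on \emph{all} top-dimensional components, the proof is incomplete exactly there.

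This is why the paper's proof does not restrict to the isomorphism locus but instead works at the generic point of each component $\tilde X_i$ of $\tilde X$: it passes to a chart $U \to \AA^n$, forms $Bl_I(\AA^n_U) \to \AA^n_U$, and observes that $\tilde U$ is cut out by the pullbacks of the $n$ equations defining the graph of the chart and has codimension $n$ (this is where maximal pure log dimension enters, forcing $\dim \ul{\tilde X} = \dim \ul{X}$), so the sequence stays regular and the embedding of normal cones $\cC_\pi \hookrightarrow ch_{\tilde U}^* \cN_{\pi_I}$ is onto the whole pulled-back bundle. That computation is valid at an arbitrary top generic point of $\tilde X$, exceptional or not, and is what actually pins down all the multiplicities. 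If you want to keep your structure, you must supplement it with an argument of this kind at the generic points of the contracted components; the restriction-to-$U$ step cannot see them.
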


\begin{proof}

  Let $\pi: \tilde{X} \to X$ be a locally free blow-up. Then as stated above $\dim \ul{\tilde{X}} = \dim \ul{X}$, and so $\pi^! [X]$ is a top dimensional class on $\tilde{X}$. We are free to suppose that $\tilde{X}$ is the union $\tilde{X}_1, \ldots \tilde{X}_n$, so that $\pi^! [X] = \sum n_i [\tilde{X}_i]$ and the claim is that $n_i = mult \: \tilde{X}_i$ where $mult$ denotes the length at the generic point. Let us fix one of the components of $\tilde{X}$, $\tilde{X}_1$

  Since we are dealing with top dimensional classes this is local on $X$, so we may move to \'etale open sets $\tilde{U}$ and $U$ of $\tilde{X}$ and $X$ such that $\tilde{U}$ contains the generic point of $\tilde{X}_1$ and where we have charts fitting into the following diagram:

  \begin{figure}[h]
    \centering
  \begin{tikzcd}
    \tilde{U} \arrow{r} \arrow{d} & U \arrow{d} \\
    Bl_I (\AA^n) \arrow{r} & \AA^n
  \end{tikzcd}
  \end{figure}

  There is an associated embedding of normal cones $\cC_{\pi} \to ch_{\tilde{U}} ^* \cN_{\pi_I}$, which we wish to show is an isomorphism. There is an affine bundle $\AA^n_U \to U$ with a relatively smooth blow up $\pi: Bl_I (\AA^n_U) \to \AA^n _U$ which is smooth over $U$ given by pulling back charts. Say that $U$ is cut out as the zero section by $x_1, \ldots x_n$, then $\pi^* x_1, \ldots \pi^* x_n$ cut out $\tilde{U}$. These remain regular since $\tilde{U}$ is codimension $n$ inside $Bl_I (\AA^n _U)$. In particular the normal cone of $\tilde{U} \to U$ is a bundle.

We now have a sub-bundle of the same dimension as $ch_{\tilde{U}} ^* \cN_{\pi_I}$, hence these are equal and the desired class is just $mult_i [U_i]$.

\end{proof}  

These log blow-ups are almost birational, there may be some components that are contracted but away from these the map is birational. A corollary is:

\begin{cor}
\label{cor:pushforward}
Suppose that $X$ has compatible fundamental class and $\pi: \tilde{X} \to X$ a locally free blow-up, then $\pi_* \pi^! ([X]) = [X]$.

\end{cor}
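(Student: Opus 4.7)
The strategy is to exploit the hypothesis to replace $\pi^![X]$ by the fundamental class of $\tilde{X}$, then analyse the underlying birational geometry of $\pi$. Explicitly, the compatible fundamental class assumption gives $\pi^![X]=[\tilde{X}]$ directly, so it suffices to prove $\pi_*[\tilde{X}]=[X]$.

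The key step is to argue that $\ul{\pi}$ is a local isomorphism at the generic point of every irreducible component of $\ul{X}$. The compatibility assumption, combined with the fact that $\pi^!$ is of degree zero, enforces $\dim\ul{\tilde{X}}=\dim\ul{X}$; the preceding example of $\Spec k_{\NN^2}\to\PP^1$ shows that this dimension matching can fail whenever the characteristic monoid of $X$ at the generic point of a component has rank at least two. Consequently the hypothesis implicitly pins every such generic point to have characteristic of rank zero or one. At such a generic point the sheaf of log ideals defining $\pi$ is either the unit ideal (rank zero case) or is principal of the form $(t^n)$ for the generator $t$ of the characteristic (rank one case); since blowing up a principal ideal is a local isomorphism, $\pi$ is a local isomorphism at every such generic point.

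From this it follows that each irreducible component $X_i$ of $\ul{X}$ has a unique preimage component $\tilde{X}_i$ of $\ul{\tilde{X}}$ mapping birationally onto it, while every other component of $\ul{\tilde{X}}$ is contracted onto a proper closed subset of some $X_i$ and so contributes zero under $\pi_*$. Because $\pi$ induces an isomorphism of local rings at the generic point of each $\tilde{X}_i$, the length of the structure sheaf there equals the length at the generic point of $X_i$. Summing yields
\[
\pi_*[\tilde{X}]=\sum_i \operatorname{mult}(X_i)\,[X_i]=[X],
\]
which completes the proof.

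The principal obstacle is the generic-point analysis in the second paragraph: the full force of the compatibility hypothesis is used to exclude the pathological higher-rank behaviour in which the log blow-up could introduce a projective exceptional fibre above a generic point, which would destroy both the dimension matching and the birational structure required for the pushforward identity.
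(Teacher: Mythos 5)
Your argument is correct and is essentially the paper's own: the corollary is stated there without proof, the intended justification being exactly that $\pi^![X]=[\tilde{X}]$ holds by definition of compatible fundamental class and that locally free log blow-ups are ``almost birational'' --- birational away from contracted components --- which is the generic-point analysis you carry out in detail (and your observation that compatibility forces the generic characteristic monoids to have rank at most one matches the paper's notion of maximal pure log dimension). The only step left implicit in both your write-up and the paper is that every component of $\tilde{X}$ not dominating a component of $X$ genuinely drops dimension under $\pi$ (so that its pushforward vanishes) rather than mapping finitely onto a proper closed subset of the same dimension; this is what the word ``contracted'' is doing, and it rests on the fact that $\pi$ is an isomorphism precisely over the locus where the log ideal is locally generated by a single monomial.
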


From this lemma we can deduce another corollary (the proof is obvious in the quasi-projective case using the moving lemma and the above result, but it remains true in the general case):

\begin{cor}
\label{NoetherianInduction}
  Let $Y$ be a log scheme. Then there is a blow-up $\pi: \tilde{Y} \to Y$ and a collection of closed embeddings $i_j: Z_j \to \tilde{Y}$ and integers $n_j$ such that each $Z_j$ has compatible fundamental class and $\pi^! [Y] = \sum n_j i_* [Z_j]$.
  
\end{cor}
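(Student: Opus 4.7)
The plan is to reduce Corollary~\ref{NoetherianInduction} to Lemma~\ref{compatiblefundamentalclass} by a two-stage refinement: first pass to a locally free log blow-up so that cycle decompositions on the underlying scheme are available, and then refine further so that every component appearing in the decomposition of $\pi^![Y]$ satisfies the maximal pure log dimension hypothesis of the lemma.

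For the first stage, the combinatorial subdivision theorem of Kato and Fujiwara from~\cite{FKato} guarantees that every fs log scheme admits a locally free log blow-up $\pi_0: \tilde{Y}_0 \to Y$. Set $\alpha := \pi_0^![Y] \in A^\dagger_{\dim \ul{Y}}(\tilde{Y}_0)$; viewed as a cycle on the underlying scheme $\ul{\tilde{Y}_0}$ this is represented by a finite sum $\sum n_j [V_j]$ with each $V_j$ integral and closed. Equip each $V_j$ with the strict pullback log structure, which is automatically locally free because $\tilde{Y}_0$ is.

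For the second stage, observe that if the generic point of $V_j$ lies in a log stratum of rank at most $1$ and $V_j$ is of pure log dimension, then by definition $V_j$ has maximal pure log dimension and Lemma~\ref{compatiblefundamentalclass} produces compatible fundamental class. To force this globally, perform additional locally free log blow-ups: for any $V_j$ whose generic point lies in a stratum of rank $r \geq 2$, subdivide the corresponding cone in the characteristic fan so that the strict transform of $V_j$ has generic point in a stratum of strictly smaller rank, and similarly refine to make $V_j$ transverse to the deep strata so that $\rk - \codim$ becomes constant along each irreducible component. Iterating, and using Noetherian induction (as flagged by the corollary's label) on the lexicographic pair consisting of the maximum generic log rank among components of the cycle and the number of components attaining that rank, the procedure terminates in finitely many steps. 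The resulting composition $\pi: \tilde{Y} \to Y$ together with the final integral components $Z_j$ furnishes the claimed decomposition, since pullback by the additional refinements preserves the relation $\pi^![Y] = \sum n_j i_{j*}[Z_j]$ by functoriality of the Gysin maps on $\catname{Fr Bl}_X$.

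The main obstacle is controlling the cycle decomposition through these successive refinements: each refinement $\tilde{Y}_{k+1} \to \tilde{Y}_k$ pulls back the cycle and can introduce new components supported on the exceptional loci whose generic log rank must also be tracked; the induction must be arranged so that the chosen invariant strictly decreases, which is where the effectiveness of subdividing along closures of deep strata is essential. In the quasi-projective setting this iteration can be short-circuited by Fulton's moving lemma (\cite{Fulton}, Chapter 11), which allows $\alpha$ to be moved into general position with respect to the log stratification of $\tilde{Y}_0$ in a single stroke, placing every component's generic point in the open stratum and making maximal pure log dimension automatic.
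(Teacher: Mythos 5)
Your first stage (pass to a locally free log blow-up and decompose the pulled-back class into integral components) matches the paper's opening move, except that the paper uses the barycentric subdivision specifically, so that the ambient blow-up is itself of maximal pure log dimension with generic log structure $\NN$ on every component. The divergence, and the gap, is in your second stage. You propose to achieve maximal pure log dimension for each component $V_j$ purely by further log blow-ups: subdividing cones to lower the generic log rank of strict transforms and to make the $V_j$ transverse to the deep strata. But a log blow-up acts on cycle classes by Gysin pullback, not by strict transform, and subdivision alone cannot make a \emph{fixed} subvariety transverse to the log stratification: a component passing non-transversally through a deep stratum pulls back to its strict transform plus correction terms supported on the exceptional locus, and nothing forces either piece to be of pure log dimension. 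The condition that $\rk \CharSheaf{\zeta} - \codim \zeta$ be constant is imposed at \emph{every} log stratum met by the component, not just at its generic point, and the only mechanism available for arranging it is to change the representative within its rational equivalence class --- i.e.\ a moving lemma. Your proposal invokes the moving lemma only as an optional shortcut in the globally quasi-projective case; in the general case you replace it with an induction on the pair (maximal generic rank, number of components attaining it) whose termination you yourself flag as ``the main obstacle'' without resolving it: new exceptional components of arbitrary rank can appear at each refinement, so the invariant is not shown to decrease.

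The paper closes exactly this gap by a different induction. After the barycentric subdivision it chooses an open affine $U_1$ (hence quasi-projective), applies the moving lemma there to represent $\pi^![U_1]$ by cycles transverse to the log strata, refines by further log blow-ups only so that the \emph{closures} of these already-transverse representatives remain transverse, and observes that the discrepancy with $\pi^![Y]$ is supported on the closed complement $D_1$. The Noetherian induction then runs on the descending chain of closed complements $D_1 \supset D_2 \supset \cdots$, terminating at a point, rather than on the log rank of cycle components. To salvage your route you would need, at each stage, to move the pulled-back class within its rational equivalence class on the exceptional locus --- which is precisely where quasi-projectivity, and hence the restriction to affine opens, has to enter.
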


\begin{proof}

  We prove this by Noetherian induction on the dimension of the images of various strata in $Y$. Let $Y_1 \to Y$ be the barycentric subdivision of $Y$. This is of maximal pure log dimension and every irreducible component has generic log structure $\NN$. If $Y$ is quasi-projective then we can apply the moving lemma~\cite{StacksMovingLemma} to $\pi^! ([Y])$ to get a transverse representative. We apply this idea, together with Noetherian induction, to prove the claim for all $Y$.

  Choose an open affine of $Y_1$, $U_1$, with complement $D_1$. Since $U_1$ is affine it is quasi-projective and we represent $\pi^![U_1]$ by a collection of classes $\sum n_i Z_1^{(i)}$ which are transverse to all the log strata. The closure of these need not be transverse, but we can refine $Y_1$ further until they are, changing $U_1$ by log blow-ups. Since the representatives have compatible fundamental class on $U_1$, they retain this after a further blow-up of $U_1$. The difference between $\pi^! ([Y])$ and $\sum n_i  \bar{Z_1^{(i)}}$ is supported over $D_1$. Apply the same argument to these classes over an open affine $U_2$ of $D_1$, the fibre over $U_2$ of any log blow-up is again quasi-projective, with complement $D_2$ and extend the blow-ups as needed over $U_1$.

  Therefore by Noetherian induction we reduce to the case where $D_n$ is a point inside $Y$. Then we have a collection of classes on $\PP^m$ which we want to write as being transverse to the log structure. But these are represented by a collection of hyperplanes.
  
\end{proof}

\subsection{Actions of morphisms}

We now must construct the usual paraphenalia of Chow theory, a log flat pullback, a proper pushforward, Gysin maps etc. We first demonstrate that a little care must be taken with log flat pullback and proper pushforward.

\begin{eg}

  Consider the square:
  
  \begin{figure}[H]
    \centering
    \begin{tikzcd}
      \PP^1 \arrow{r}{\pi} \arrow{d}{\pi} & \Spec k_{\NN^2} \arrow{d} \\ \Spec k_{\NN^2} \arrow{r} & \Spec k_{\NN^2}
      \end{tikzcd}
  \end{figure}
  \noindent
where $\PP^1 \to \Spec k_{\NN^2}$ is the morphism in the previous example and the other two morphisms are the identity. We compare $\pi^*$ and $\pi^!$ applied to the class of a point to see that in this case they must agree. In particular here the log flat pullback is \emph{not} given by taking inverse image, even though all of the maps are also flat.

Now consider the square:
  \begin{figure}[h]
    \centering
    \begin{tikzcd}
      \PP^1 \arrow{r}{\pi} \arrow{d}{\pi} & \PP^1 \arrow{d}{\one} \\ \PP^1 \arrow{r}{\one} & \Spec k_{\NN^2}
      \end{tikzcd}
  \end{figure}
  
\noindent  this square is Cartesian in the category of fs log schemes. We compare pushforward of the fundamental class of $\PP^1$, working both ways around this square and starting in the bottom left. The composite $\one_* \one^! ([\PP^1])$ is of course $[\PP^1]$. Unfortunately the composite $\pi^! \pi_* ([\PP^1])$ is zero, since $\pi_* ([\PP^1])$ is already zero.

  In both cases the problem stems from the fact that neither class is represented by a cycle with compatible fundamental class.
  
\end{eg}

\begin{con}
\label{con:pushforward}
    Let $f: X \to Y$ be a proper morphism with $X$ and $Y$ locally free. Suppose that we have $\tilde{f}: \tilde{X} \to \tilde{Y}$ an integral lift of $f$, with log blow-up $\pi_Y$ and modification $\pi_X$ respectively, $\tilde{Y}$ locally free and $\bar{X} \to \tilde{X}$ a log blow-up with $\bar{X}$ locally free. In diagrams we have the following picture:
  \begin{figure}[h]
    \centering
    \begin{tikzcd} \bar{X} \arrow{r}{\psi_{\tilde{X}}} \arrow{rd}{\phi_X} & \tilde{X} \arrow{d}{\pi_X} \arrow{r}{\tilde{f}}  & \tilde{Y} \arrow{d}{\pi_Y} \\ & X \arrow{r}{f} & Y 
      \end{tikzcd}
  \end{figure}
  
  There is a canonical map from $A_* (X)$ to $A_* (\tilde{Y})$ defined by $\tilde{f}_* \psi_{\tilde{X} \: *} \phi^!_X$, which we denote $f_\# ^{\tilde{Y}}$. This notation already supresses various of the dependencies of this map, for instance on the choice of $\bar{X}$. We first show independence from the choice of integral lift after passing to a common refinement using the Gysin maps.

  Any two choices of integral lift are dominated by a third, so let us assume that we have $\hat{f} : \hat{X} \to \hat{Y}$ a integral lift of $\tilde{f}$ (which is already itself integral!), with log blow-ups $\pi_{\tilde{Y}}$ and log modifications $\pi_{\tilde{X}}$ respectively, $\hat{Y}$ locally free, and $\psi_{\hat{X}}: \check {X} \to \hat{X}$ a further log blow-up such that $\check{X}$ is locally free and admits a log blow-up map $\phi_{\bar{X}}:\check{X} \to \bar{X}$. In short we find ourselves with this diagram: 
  \begin{figure}[h]
    \centering
    \begin{tikzcd}
       \check {X} \arrow{r}{\psi_{\hat{X}}} \arrow{rd}{\phi_{\bar{X}}} & \hat{X} \arrow[bend left=30]{rr}{\hat{f}} \arrow{r} \arrow{d}{\pi_{\bar{X}}} & \tilde{X} \times_{\tilde{Y}} \hat{Y} \arrow{r}{\tilde{f}{\mid}_{\hat{Y}}} \arrow{d}{\pi_{\tilde{Y}} {\mid} _{\tilde{X}}} & \hat{Y} \arrow{d}{\pi_{\tilde{Y}}} \\ & \bar{X} \arrow{r}{\psi_{\tilde{X}}} \arrow{rd}{\phi_X} & \tilde{X} \arrow{d}{\pi_X} \arrow{r}{\tilde{f}}  & \tilde{Y} \arrow{d}{\pi_Y} \\ & & X \arrow{r}{f} & Y 
      \end{tikzcd}
  \end{figure}

  \noindent where since $\tilde{f}$ is integral the underlying scheme of $\tilde{X} \times_{\tilde{Y}} \hat{Y}$ is the product of the underlying schemes. Suppose that $[V]$ is a class on $X$ representing a strict embedding $V \to X$ where $V$ has compatible fundamental class and we wish to show that $\pi_{\tilde{Y}}^! f_\# ^{\tilde{Y}} ([V]) = f_\# ^{\hat{Y}} ([V])$. After replacing $X$ by $V$ we may assume that $X$ has compatible fundamental class. Now we wish to prove that $\pi_{\tilde{Y}}^! f_\# ^{\tilde{Y}} ([X]) = f_\# ^{\hat{Y}} ([X])$. First note that since $\tilde{f}$ is integral the morphism $\pi_{\tilde{Y}}{\mid}_{\tilde{X}}: \tilde{X} \times_{\tilde{Y}} \hat{Y} \to \tilde{X}$ has an induced perfect obstruction theory and $\pi_{\tilde{Y}}{\mid}_{\tilde{X}}^! ([\tilde{X}]) = [\tilde{X} \times_{\tilde{Y}} \hat{Y}]$ by the same arguments as in~\ref{compatiblefundamentalclass}. Then by definition:
  \begin{align*}
    \pi_{\tilde{Y}}^! f_\# ^{\tilde{Y}} ([X]) &= \pi_{\tilde{Y}}^! \tilde{f}_* \psi_{\tilde{X} \: *} \phi^!_X ([X]) \\
    &= \pi_{\tilde{Y}}^! \tilde{f}_* ([\tilde{X}])\\
    &= (\tilde{f} {\mid}_{\hat{Y}})_* (\pi_{\tilde{Y}}{\mid}_{\tilde{X}})^! ([\tilde{X}]) \\
    &= (\tilde{f} {\mid}_{\hat{Y}})_* ([\tilde{X} \times_{\tilde{Y}} \hat{Y}]) \\
    &=  \hat{f}_* \psi_{\hat{X} \: *} ([\check{X}]) \\
    &= f_\# ^{\hat{Y}} ([X])
    \end{align*}
\noindent therefore this induces a well-defined map on the cycle classes on $X$ which are represented by cycles with compatible fundamental class.
  
  Now suppose that $\hat{f}: \hat{X} \to \hat{Y}$ is a further integral lift of $\tilde{f} \psi_{\tilde{X}}$ with blow-ups $\pi_{\bar{X}}$ and $\pi_{\tilde{Y}}$ respectively and $\psi_{\hat{X}}: \check {X} \to \hat{X}$ is a blow-up with $\check{X}$ locally free and admitting a blow-up map $\phi_{\bar{X}}: \check{X} \to \bar{X}$. In short we have the following picture:
  \begin{figure}[h]
    \centering
    \begin{tikzcd}
       \check {X} \arrow{r}{\psi_{\hat{X}}} \arrow{rd}{\phi_{\bar{X}}} & \hat{X} \arrow{rr}{\hat{f}} \arrow{d}{\pi_{\bar{X}}} & \:  & \hat{Y} \arrow{d}{\pi_{\tilde{Y}}} \\ & \bar{X} \arrow{r}{\psi_{\tilde{X}}} \arrow{rd}{\phi_X} & \tilde{X} \arrow{d}{\pi_X} \arrow{r}{\tilde{f}}  & \tilde{Y} \arrow{d}{\pi_Y} \\ & & X \arrow{r}{f} & Y 
      \end{tikzcd}
  \end{figure}

\noindent  and we wish to compare $(\tilde{f} \psi_{\tilde{X}})_\#^{\hat{Y}} \phi_X ^! ([V])$ and $f_\#^{\hat{Y}} ([V])$ for some cycle class $[V]$ where $V$ has compatible fundamental class. Then as before we may replace $X$ by $V$. By direct comparison of classes on $\check{X}$ we have an equality $(\tilde{f} \psi_{\tilde{X}})_\#^{\hat{Y}} \phi_X ^! ([X]) = f_\#^{\hat{Y}} ([X])$. 

  Therefore we get a well defined map on the level of log Chow groups $f_*: A_* ^\dagger (X) \to A_* ^\dagger (Y)$ by representing a cycle class by a sum of cycles with compatible fundamental class. If $X$ and $Y$ are not locally free then we may pass to a blow-up of $X$ and $Y$ which are.

  \end{con}
  
We can now construct a log flat pullback map.

\begin{con}
\label{con:pullback}
  Let $f: X \to Y$ be a log flat morphism with $X$ and $Y$ locally free. Let $\tilde{f}: \tilde{X} \to \tilde{Y}$ be a log flat and integral lift of $f$ with log blow-ups $\pi_Y$ and log modifications $\pi_X$ respectively, and $\psi_{\tilde{X}} :\bar{X} \to \tilde{X}$ a blow-up with $\bar{X}$ locally free and write $\phi_{X}: \bar{X} \to X$ for the induced blow-up of $X$. As before we obtain a diagram looking as follows: 
  \begin{figure}[h]
    \centering
    \begin{tikzcd}
\tilde{X} \arrow{d}{\pi_X} \arrow{r}{\tilde{f}}  & \tilde{Y} \arrow{d}{\pi_Y} \\ X \arrow{r}{f} & Y 
      \end{tikzcd}
  \end{figure}

\noindent  The map $\tilde{f}$ is log flat and integral, hence has flat underlying morphism. Therefore there is a canonical map $A_* (Y) \to A_*(\bar{X})$ given by $\pi_{X\: *} \tilde{f}^* \pi_Y ^!$ and we write this $f^\#_{\bar{X}}$. We note that if $[V]$ is a class on $Y$ induced by a strict embedding $V \to Y$ such that both $V$ and $f^{-1} V$ have compatible fundamental classes then $f^\# ([V]) = [X \times_Y V]$.

  As before we prove that this is independent of the choice of $\tilde{f}$, and compatible with the Gysin maps. First suppose that we had a further choice of integral lift of $\tilde{f}$ (which is of course already integral). Then we have a diagram like the following:
  \begin{figure}[h]
    \centering
    \begin{tikzcd}
\bar{X} \arrow{d}{\pi_{\tilde{X}}} \arrow{r}{\bar{f}} & \bar{Y} \arrow{d}{\pi_{\tilde{Y}}} \\ \tilde{X} \arrow{d}{\pi_X} \arrow{r}{\tilde{f}}  & \tilde{Y} \arrow{d}{\pi_Y} \\ X \arrow{r}{f} & Y 
      \end{tikzcd}
  \end{figure}

  So long as $[V]$ has compatible fundamental class the above remark shows that $f^\# ([V])$ is independent of the choice of integralisation.

  Now suppose that we have an integralisation $\tilde{f} : \tilde{X} \to \tilde{Y}$ with $\tilde{Y}$ locally free and a further blow-up $\pi_X : \check{X} \to X$ admitting a blow-up map $\phi_{\tilde{X}}: \check{X} \to \tilde{X}$. Let $\bar{f}: \bar{X} \to \bar{Y}$ be an integralisation of $\tilde{f} \phi_{\tilde{X}}$ with $\bar{Y}$ locally free. Then we have the following diagram:

  \begin{figure}[h]
    \centering
    \begin{tikzcd}
       \bar{X} \arrow{rr} \arrow{d} & & \bar{Y} \arrow{d}{\pi_{\tilde{Y}}}\\
       \check{X} \arrow{r}{\phi_{\tilde{X}}} \arrow{rd}{\pi_X} & \tilde{X} \arrow{r}{\tilde{f}} \arrow{d} & \tilde{Y} \arrow{d}{\pi_Y} \\
       & X \arrow{r}{f} & Y 
      \end{tikzcd}
\end{figure}

  \noindent where we wish to show the equalities $\pi_X ^! f^\# (\alpha) = (\phi_{\tilde{X}} \tilde{f})^\# \pi_Y ^! (\alpha)$. If $\alpha$ is the class of a cycle $V$ with compatible fundamental class these hold by the above formula.

  Therefore this extends to a morphism $f^*: A_* ^\dagger (Y) \to A_* ^\dagger(X)$ since every cycle is equivalent to one which has compatible fundamental class after aprropriate blow-ups and we may further refine the source to ensure that the inverse image has compatible fundamental class as well. If $X$ and $Y$ are not locally free then we may pass to a blow-up which is.

This contruction is compatible with fibre products. Given $f: X \to Y$ a locally free log flat morphism and $g: V \to Y$ any morphism then $f': X \times_Y V \to V$ is also log flat, however $X \times_Y V$ need not also be locally free. Nonetheless we can choose a refinement of $X \times_Y V$ to a log scheme which is locally free, and with the induced map to $V$ still log flat. Given a cycle on $V$ with compatible fundamental class the pullback is still given by taking the inverse image.  
  
\end{con}

Fulton constructs one more tool, Gysin maps.

\begin{con}

Suppose that $D \to X$ is the strict inclusion of a regularly embedded subscheme. If $D$ has compatible fundamental class then for any blow-up $\pi_X: \tilde{X} \to X$ the embedding $\tilde{D} := D \times_X \tilde{X} \to \tilde{X}$ remains a regularly embedded subscheme and the induced Gysin maps are compatible. Therefore they induce a morphism $A_* ^\dagger (X) \to A_* ^\dagger (D)$. We call such a map a \emph{strict Gysin map}.

\end{con}

For strict closed and open embeddings these constructions simplify considerably and give us an excision sequence. Similarly for strict affine or projective bundles we can apply the same proofs as in~\cite{Fulton}.

\begin{cor}
\label{cor:excision}
  Let $D$ be a strict closed subscheme of $X$ and $U$ the complement with strict log structure. Then there is an exact sequence:

  \[ A_* ^\dagger (D) \to A_* ^\dagger (X) \to A_* ^\dagger (U) \to 0\]

  If $X \to B$ is a strict rank $k$ affine bundle then log flat pullback gives an isomorphism $A_* ^\dagger (B) \to A_{*+k}^\dagger (X)$. Similarly if $X \to B$ is a strict rank $k$ projective bundle there is an isomorphism $A_* ^\dagger (X) \cong \bigoplus A_{*+i} ^{\dagger} (B) \xi^i$.
  
  \end{cor}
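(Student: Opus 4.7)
The plan is to reduce each of the three statements to the corresponding classical fact of \cite{Fulton} applied to a single locally free log blow-up, and then pass to the colimit defining $A_*^\dagger$. Throughout, strictness of the morphisms involved is what makes the log-theoretic constructions agree with their underlying scheme-theoretic counterparts on each blow-up, so that the classical Fulton operations survive the colimit.

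For the excision sequence, observe that because $i:D\to X$ and $j:U\to X$ are strict, any locally free log blow-up $\pi:\tilde X\to X$ arising from a log ideal $K\subseteq\CharSheaf{X}$ pulls back to locally free log blow-ups $\tilde D := D\times_X\tilde X\to D$ and $\tilde U := U\times_X\tilde X\to U$, defined by the restrictions $i^{-1}K$ and $j^{-1}K$. Conversely, any log ideal on $D$ extends to one on $X$ by pushing forward along the strict closed immersion, and similarly for $U$, so the restriction functors $\catname{Fr Bl}_X\to\catname{Fr Bl}_D$ and $\catname{Fr Bl}_X\to\catname{Fr Bl}_U$ are cofinal. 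On each such $\tilde X$ the classical excision sequence
\[ A_*(\tilde D)\xrightarrow{\tilde\imath_*}A_*(\tilde X)\xrightarrow{\tilde\jmath^*}A_*(\tilde U)\to 0 \]
of \cite{Fulton} holds, and compatibility with the transition maps in the colimit follows because for strict embeddings the pushforward and log flat pullback of Constructions~\ref{con:pushforward} and~\ref{con:pullback} reduce on each blow-up to the classical $i_*$ and $j^*$. Since filtered colimits of abelian groups are exact, passing to the colimit over $\catname{Fr Bl}_X$ delivers the desired sequence for $A_*^\dagger$.

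For the bundle statements, let $f:X\to B$ be a strict rank $k$ affine (resp.\ projective) bundle. Strictness of $f$ gives $\CharSheaf{X}\cong f^{-1}\CharSheaf{B}$, so every log ideal on $X$ is pulled back from a log ideal on $B$ and the subcategory of pullbacks of locally free log blow-ups of $B$ is cofinal in $\catname{Fr Bl}_X$. On each $\tilde B\to B$ the strict pullback $\tilde X:=X\times_B\tilde B\to\tilde B$ is again an affine (resp.\ projective) bundle of the same rank, to which the classical affine bundle invariance (resp.\ projective bundle formula) of \cite{Fulton} applies, yielding $A_*(\tilde B)\xrightarrow{\sim}A_{*+k}(\tilde X)$ (resp.\ the direct sum decomposition with hyperplane class $\xi$). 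These isomorphisms commute with Gysin pullback along further log blow-ups by the standard base-change argument, so they assemble into the claimed isomorphisms after taking colimits.

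The main obstacle is verifying the cofinality statements in the log-geometric sense, namely that for strict embeddings and strict bundles the relevant restriction or pullback functor between the blow-up categories is cofinal. Once this is in hand the rest is bookkeeping through Constructions~\ref{con:pushforward} and~\ref{con:pullback}, checking that under strictness the pushforward $i_*$ and the pullback $f^*$ reduce on a cofinal system to the classical Fulton operations, and then invoking exactness of filtered colimits of abelian groups.
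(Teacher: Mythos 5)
Your proposal is correct and follows essentially the same route the paper takes: the paper offers no separate proof beyond the remark that for strict morphisms the constructions reduce on each locally free log blow-up to the classical operations of \cite{Fulton}, which is precisely your reduction via cofinality of the restricted blow-up categories followed by exactness of filtered colimits. You have simply made explicit the cofinality and compatibility checks that the paper leaves implicit.
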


Of course there are various relations between the different maps, functoriality etc which guarantee that we have canonical orientations. These are not immediate since both definitions mix pushforward, pullback and Gysin maps in non-trivial ways.

\begin{thm}

  These constructions are functorial and commute with one another in Cartesian squares in the fine log category restricting to proper pushforward along saturated morphisms.
  
\end{thm}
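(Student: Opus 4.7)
The plan is to reduce every assertion to a formula on cycle classes $[V]$ where $V \hookrightarrow X$ is a strict embedding of a subscheme with compatible fundamental class, since by Corollary~\ref{NoetherianInduction} such cycles span $A_*^\dagger(X)$ for every locally free $X$, and Constructions~\ref{con:pushforward} and~\ref{con:pullback} were deliberately arranged so that on a sufficiently fine integral lift with locally free source, log pushforward becomes the classical pushforward of $[\bar X]$ and log flat pullback becomes the classical flat pullback of $[\tilde Y]$. The functoriality of Gysin maps for locally free log blow-ups under composition, already proved above, supplies the remaining input.

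For functoriality of pushforward along $X \xrightarrow{f} Y \xrightarrow{g} Z$, apply Theorem 3.16 of~\cite{FKato} twice: first lift $g$ to an integral $\tilde g \colon \tilde Y \to \tilde Z$, then lift $\tilde g \circ f$ to an integral $\tilde f \colon \tilde X \to \tilde Y$, and finally pass to a locally free log blow-up $\bar X \to \tilde X$. On this diagram both $(gf)_*$ and $g_* f_*$ are computed by classical proper pushforward along the integral composition $\tilde g \tilde f$ applied to $[\bar X]$, and classical pushforward is functorial. Log flat pullback functoriality is the mirror image: a simultaneous integral log flat lift exists, the underlying morphism of an integral log flat morphism is classically flat, and classical flat pullback is functorial.

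For commutativity in an fs Cartesian square
\[
\begin{tikzcd}
X' \arrow{r}{g'} \arrow{d}{f'} & X \arrow{d}{f} \\
Y' \arrow{r}{g} & Y
\end{tikzcd}
\]
with $g$ log flat and $f$ proper, take an integral log flat lift $\tilde g$ of $g$ and an integral lift $\tilde f$ of $f$. Since both vertical maps are integral the fs fibre product $\tilde X \times_{\tilde Y} \tilde Y'$ has underlying scheme equal to the classical fibre product, so the lifted square is classically Cartesian, and the identity $g^* f_* = f'_* g'^*$ on cycle classes with compatible fundamental class reduces to classical flat base change. Commutativity with strict Gysin maps is handled the same way, using that a strict regular embedding commutes with every locally free log blow-up on underlying schemes by definition. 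For the last clause, if $f$ is saturated then the fs fibre product of $f$ with any morphism preserves underlying schemes, so $f$ serves as its own integral lift and Construction~\ref{con:pushforward} collapses to classical proper pushforward.

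The hard part will be establishing the precise compatibility of the canonical rank zero obstruction theory on a locally free log blow-up with classical proper pushforward and classical flat pullback, because this is what allows one to slide Gysin symbols past the classical Fulton operations in the diagrams above. The statement reduces to a local check on the toric model $Bl_I \AA^n \to \AA^n$, and follows from the fact that this obstruction theory is stable under classically flat base change and compatible with proper pushforward in the sense of Manolache's virtual pullback and pushforward formalism~\cite{VirtualPullbacks,VirtualPushforward}. Once these two local compatibilities are in hand, the functoriality and commutativity assertions become systematic bookkeeping over the combinatorics of choosing integral lifts.
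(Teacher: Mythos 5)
Your proposal follows essentially the same route as the paper: reduce to cycles with compatible fundamental class, pass to integral lifts so that log pushforward and log flat pullback become the classical operations on underlying schemes, and then invoke classical functoriality and flat base change, with the compatibility of the blow-up Gysin maps (Lemma~\ref{compatiblefundamentalclass} and the Manolache formalism) supplying the remaining commutations. The paper's proof is just a more explicit version of your "systematic bookkeeping," carrying out the diagram chase of mutually dominating integralisations that you defer to at the end.
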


\begin{proof}

  Functoriality of log flat pull back follows from the description as a Cartesian product for classes with compatible fundamental class. Functoriality of proper pushforward follows by functoriality of pushforward for the classical pushforward. Functoriality for strict Gysin maps follows directly from the classical statement, since strict fs fibre products have underlying scheme the product of underlying schemes.

  To prove commutativity we suppose that we have a Cartesian diagram in the fs category:
  \begin{figure}[H]
    \centering
    \begin{tikzcd}
      W \arrow{r}{f'} \arrow{d}{g'} & V \arrow{d}{g} \\ X \arrow{r}{f} & Y
  \end{tikzcd}
  \end{figure}

  \noindent where $X$, $Y$ and $Z$ are locally free and let $\hat{W}$ be a locally free resolution of $W$. Suppose that f is a strict regular embedding and $g$ is proper or log flat. Since exactness and integrality are preserved under fs base change, strict fibre products commute with passing to underlying schemes and Gysin maps we can find compatible diagrams for the pullback or pushforward along $g$ and the equality $g'^* f^! = f'^! g^*$ or $f^! g_* = g'_* f'^!$ holds.

  Now suppose that $f$ is proper and $g$ log flat and we wish to show that $g^* f_* [Z] = g'^* f'_* [Z]$ where $Z \subset X$ has compatible fundamental class. By taking the base change along $Z \to X$ we may assume that $Z = X$, and after refining further we may assume that $X$, $W$ and the image of $X$ have compatible fundamental class.

  To calculate $f_* [X]$ we choose an integralisation $\tilde{f}: \tilde{X} \to \tilde{Y}$, and a locally free blow-up $\mathring{X}$. We would end with a class on $\tilde{Y}$ which we may assume is locally free. Let $\tilde{V}$ be a locally free blow-up of $V$ admitting a map, $\tilde{g}$, to $\tilde{Y}$. To calculate $\tilde{g}^*$ we choose an integralisation $\bar{g}: \bar{V} \to \bar{Y}$. Let $\bar{X}$ be the induced blow-up of $\tilde{X}$, and note that we are free to assume that $\mathring{X}$ maps to this. Write $\bar{W}$ for the induced fibre product. In short we have the following diagram with Cartesian squares: 
  \begin{figure}[h]
    \centering
    \begin{tikzcd}
      & & & & \bar{V} \arrow{rd}{\psi_{\tilde{V}}} \arrow{ddd}{\bar{g}} & & \\
      & & & & & \tilde{V} \arrow{rd} \arrow{ddd}{\tilde{g}} & \\
      & & & & & & V \arrow{ddd} \\
      & \bar{X} \arrow{rd} \arrow{rrr}{\bar{f}} & & & \bar{Y} \arrow{rd}{\psi_{\tilde{Y}}} & & \\
      \mathring{X} \arrow{ru}{\pi_{\bar{X}}} \arrow{rr}{\pi_{\tilde{X}}} \arrow{rrrd}{\pi_X} & & \tilde{X} \arrow{rd} \arrow{rrr}{\tilde{f}} & & & \tilde{Y} \arrow{rd}{\psi_Y} & \\
      & & & X \arrow{rrr}{f} & & & Y \\
    \end{tikzcd}
  \end{figure}
  
  \noindent The composite we wish to calculate is
  \[\pi_{\tilde{V} \: *} \bar{g}^* \psi_{\tilde{Y}}^! \tilde{f}_* \pi_{\tilde{X} \: *}\pi_X^! ([X])\]
  By the above this is equal to
  \[\pi_{\tilde{V} \: *} \bar{g}^* \bar{f}_* \pi_{\bar{X} \: *}\pi_X^! ([X])\]
  and since $X$ has compatible fundamental class this is
  \[\pi_{\tilde{V} \: *} \bar{g}^* \bar{f}_* ([\bar{X}])\]
  
Meanwhile to calculate $g'^*$ we must choose an integralisation of the induced map $\hat{W} \to X$, which we may assume lives over $\mathring{X}$, so $\mathring{W} \to \mathring{X}$. This produces a class on $\hat{W}$ and we wish to push this forward to a class on $V$. To do so we integralise this morphism, which we may assume occurs over $\tilde{V}$, so we have $\tilde{W} \to \tilde{V}$ and a locally free blow-up of $\tilde{W}$, $\check{W}$. In short we have the following diagram:
  \begin{figure}[h]
    \centering
    \begin{tikzcd}
      \check{W} \arrow{rr}{\psi_{\tilde{W}}} \arrow{rd}{\psi_{\hat{W}}} \arrow[bend right=30]{rrdd}{\psi_{\hat{W}}} & & \tilde{W} \arrow{dd} \arrow{rrrdd}{\tilde{f}'} & & & & \\
      & \bar{W} \arrow{rrr}{\bar{f}'} \arrow{ddd}{\bar{g}'} \arrow{rd} &  & & \bar{V} \arrow{rd}{\psi_{\bar{V}}} \arrow{ddd}{\bar{g}} & & \\
      \mathring{W} \arrow{ddd}{\mathring{g}'} \arrow{rr} {\pi_{\hat{W}}} \arrow{rrrd} & & \hat{W} \arrow{rd} \arrow{rrr} & & & \tilde{V} \arrow{ddd} \arrow{rd} & \\
      & & & W \arrow{ddd}& & & V \\
      & \bar{X} \arrow{rd} \arrow{rrr}{\bar{f}} & & & \bar{Y} \arrow{rd} & & \\
      \mathring{X} \arrow{ru}{\pi_{\bar{X}}} \arrow{rr} \arrow{rrrd}{\pi_X} & & \tilde{X} \arrow{rd} \arrow{rrr} & & & \tilde{Y}  & \\
      & & & X & & & \\
    \end{tikzcd}
  \end{figure}  

  \noindent and we wish to calculate
  \[\tilde{f}'_* \psi_{\tilde{W}} \psi_{\hat{W}}^! \pi_{\hat{W} \: *} \mathring{g}'^* \pi_X ^! [X]\]
  This is equal to
  \[\psi_{\tilde{V} \: *}\bar{f}'_* \psi_{\bar{W}} \psi_{\hat{W}}^! \pi_{\hat{W} \: *} \mathring{g}'^* \pi_X ^! [X]\]
  Since $X$ and $W$ have compatible fundamental class this is equal to $\psi_{\tilde{V} \: *}\bar{f}'_* [\bar{W}]$, or expanding this via functoriality:
  \[\psi_{\tilde{V} \: *}\bar{f}'_* \bar{g'}^* [\bar{X}]\]
  Finally the square with bars is Cartesian in the category of schemes and $\bar{g}$ is flat and so we can commute $\bar{f}_*$ and $\bar{g}^*$. Therefore the two expressions are equal.
    
\end{proof}

We recall the \emph{Artin fan} construction of~\cite{BoundednessOfTheSpaceOfStableLogarithmicMaps} and\cite{AbramovichWise}. This associates to a log scheme $X$ a canonical log Artin stack $\cA_X$ such that there is a strict map $X \to \cA_X$ and $\cA_X$ is log \'etale over the trivial log point. A scheme $X$ is log flat over $\Spec k$ if and only if it is flat over $\cA_X$, and similarly for log \'etale or log smooth.

So now suppose that $X$ is log flat over the trivial log point. Then we can pullback classes from $\cA_X$ via flat pullback. Given a log strata of $X$, $X_M$, of codimension $n$ there is a canonical subscheme in $\cA_X$ of codimension $n$, and flat pullback maps this class with $[X_M]$. When $X$ is a toric variety with its toric log structure the Chow groups are generated by the closure of torus orbits and hence this pullback is in fact an isomorphism. 

\subsection{Bivariant classes and Poincar\'e duality}

The most useful part of the theory is the study of bivariant classes, which we are now able to introduce:

\begin{dfn}

We define the \emph{log bivariant theory of $\psi:X \to Y$ of relative dimension $k$} to be the group $A^k_\dagger (f: X \to Y)$ generated by collections of morphisms $f_\phi$ for every map $\phi: Z \to Y$ mapping $A_*^\dagger (Z) \to A_{*-k} ^\dagger (X \times_Y Z)$ and commuting with saturated proper pushforward, log flat pullback and all strict Gysin maps.

\end{dfn}

For instance any Cartier divisor $D$ with compatible fundamental class on any blow-up of $X$ induces an element in the log Chow cohomology $A^1 (D \to X)$. Similarly a strict map between schemes log smooth over the trivial log point can be refined to one between log schemes with smooth underlying scheme, and the induced Gysin map commutes with further refinements. Once we have a version of Poincar\'e duality we will be able to drop the strictness assumption.

\begin{dfn}

  The final piece of data that we need is a product map for cycles. Given cycles $\alpha$ and $\beta$ on $X$ and $Y$ respectively we may lift these to actual closed subschemes of $X$ and $Y$ with $X$ and $Y$ locally free. Then the product $X \times_{\Spec k} Y$ is again locally free, and we have an associated cycle $\alpha \times \beta$. These depend on $\alpha$ and $\beta$ up to rational equivalence and is stable under Gysin pullback once $\alpha$ and $\beta$ have compatible fundamental class.

\end{dfn}

\begin{rmk}

In~\cite{MultiplicativityOfTheDoubleRamificationCycle} and~\cite{DhruvProducts} much of the hard work deals with the fact that intersection products do not commute with pushforward. Given a refinement $\pi: \tilde{X} \to X$ and cycles $\alpha$ and $\beta$ on $\tilde{X}$ there is \emph{not} an equality $\pi_* (\alpha \cap \beta) = \pi_*\alpha \cap \pi_*\beta$.

This does not occur here, the map $\pi_*$ sends the class $\alpha \cap \beta$ on $\tilde{X} \times \tilde{X}$ to the class $\alpha \cap \beta$ on $X \times X$, $\alpha$ to $\alpha$ on $X$ and $\beta$ to $\beta$, as colimits the groups remember the log schemes on which the cycles live, at least up to further blow-ups.

\end{rmk}

A certain amount of algebra copies across from the classical case. In particular we get the following version of \emph{Poincar\'e Duality} for these groups.

\begin{cor}
\label{cor:Poincare}
  Let $f: X \to Y$ be an arbitrary log morphism and $g: Y \to Z$ be a log smooth morphism of relative dimension $d$. Then the composition map $A^{*}_\dagger (f : X \to Y)$ to $A^{*-d}_\dagger (gf:X \to Z)$ is an isomorphism with inverse $\Gamma_f \cdot g^* (-)$.
  
\end{cor}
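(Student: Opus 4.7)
The plan is to adapt Fulton's proof of Poincar\'e duality for bivariant theories to the log setting. The composition map sends $\alpha \in A^*_\dagger(f)$ to $\alpha \cdot [g] \in A^{*-d}_\dagger(gf)$, where $[g] \in A^{-d}_\dagger(g)$ is the bivariant class of log flat pullback along $g$; this class exists because $g$ is log smooth, hence log flat, of relative dimension $d$. The inverse will be built from the log graph $\Gamma_f = (\mathrm{id}, f) : X \to X \times_Z Y$, where the fibre product is taken in the fs log category and the first projection $p_1 : X \times_Z Y \to X$ is log smooth of relative dimension $d$ as a base change of $g$.

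The main preliminary step is to attach a bivariant class $[\Gamma_f] \in A^d_\dagger(\Gamma_f)$ to the section $\Gamma_f$ of the log smooth morphism $p_1$. I would argue that after passing to log modifications making $X \times_Z Y$ locally free and choosing \'etale toric charts for $g$, the lifted section becomes a strict regular embedding of codimension $d$ cut out by the coordinates in the fibre direction of a toric model. The strict Gysin construction then yields a Gysin pullback, and compatibility with further log blow-ups (via Lemma~\ref{compatiblefundamentalclass} and Construction~\ref{con:pullback}) assembles these local pieces into the desired bivariant class.

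With $[\Gamma_f]$ available, define $d(\beta) := [\Gamma_f] \cdot g^*(\beta)$, which unwinds for $\phi : W \to Y$ and $\alpha \in A_*^\dagger(W)$ to $d(\beta)_\phi(\alpha) = \Gamma_f^! \beta_{g\phi}(\alpha)$, interpreted via the Cartesian square
\[
\begin{tikzcd}
W \times_Y X \arrow{r} \arrow{d} & X \arrow{d}{\Gamma_f} \\
W \times_Z X \arrow{r} & X \times_Z Y.
\end{tikzcd}
\]
The identities $c \circ d = \mathrm{id}$ and $d \circ c = \mathrm{id}$ then reduce, by the bivariant compatibility of the classes with log flat pullback and strict Gysin maps, to the single classical input that $\Gamma_f^! \circ p_1^* = \mathrm{id}$ for a section of a log smooth morphism.

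The hard part will be constructing $[\Gamma_f]$ rigorously: in classical Chow theory $g$ smooth automatically forces $\Gamma_f$ to be regularly embedded, but in the log setting $\Gamma_f$ is rarely strict, since the monoidal data on $X \times_Z Y$ is the amalgamated sum of those on $X$ and $Y$ over $Z$. Exhibiting the local toric model that makes the graph strict after log modification, and checking that the resulting Gysin pullbacks glue coherently in the colimit defining $A_*^\dagger$, is the real content; once this is accomplished the remainder is the formal Fulton-type calculation.
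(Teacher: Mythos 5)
Your proposal follows exactly the route the paper takes: the paper offers no proof of Corollary~\ref{cor:Poincare} beyond the remark that ``a certain amount of algebra copies across from the classical case,'' i.e.\ it silently invokes the Fulton-style argument with inverse $\Gamma_f \cdot g^* (-)$ that you spell out. If anything you supply more detail than the paper does, in particular by isolating the genuine issue that $\Gamma_f$ is not strict and must be turned into a strict regular embedding after log modification --- a point the paper only acknowledges implicitly elsewhere (e.g.\ in its remark that the diagonal of a log smooth morphism becomes a regular embedding after blow-ups).
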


The distinguishing feature of classical Chow groups is the presence of an isomorphism between the bivariant theory of the structure map to a point and the cycle groups. We will prove that, although not all log schemes are log flat over the trivial log point, this result continues to hold by embedding $X$ into the enveloping vector bundle $E$.

\begin{thm}
\label{thm:classes}
  Let $X$ be a log scheme. Then there is a canonical isomorphism $A_\dagger ^{-*} (X \to \Spec k) \to A_* ^\dagger (X)$.
  
\end{thm}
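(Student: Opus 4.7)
The plan is to construct mutually inverse maps in both directions, using an enveloping vector bundle to overcome the failure of log flatness of $X \to \Spec k$.

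Define the forward map $\Phi \colon A_\dagger^{-*}(X \to \Spec k) \to A_*^\dagger(X)$ by $\Phi(c) = c([\Spec k])$, which lands in $A_*^\dagger(X)$ since $X \times_{\Spec k} \Spec k = X$. In the reverse direction, for $\alpha \in A_k^\dagger(X)$, I would let $\Psi(\alpha) \in A_\dagger^{-k}(X \to \Spec k)$ be the bivariant class sending $\beta \in A_m^\dagger(Z)$ (for arbitrary $Z \to \Spec k$) to the external product $\beta \times \alpha \in A_{m+k}^\dagger(Z \times_{\Spec k} X)$, formed after refining by log blow-ups so that all cycles in sight carry compatible fundamental classes. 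That $\Psi(\alpha)$ commutes with saturated proper pushforward, log flat pullback, and strict Gysin maps is a direct check from the functoriality of the external product and the compatibilities established in the previous subsection.

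The composition $\Phi \circ \Psi = \mathrm{id}$ is immediate: $\Psi(\alpha)([\Spec k]) = [\Spec k] \times \alpha = \alpha$. The substantive content is proving $\Psi \circ \Phi = \mathrm{id}$, i.e.\ that $c(\beta) = \beta \times c([\Spec k])$ for every bivariant class $c$ and every $\beta \in A_*^\dagger(Z)$. Classically this is obtained by writing $\beta$ as a flat pullback of $[\Spec k]$ along $Z \to \Spec k$ and applying bivariance, but $Z \to \Spec k$ need not be log flat here. My plan is to locally factor the structure map through an enveloping vector bundle: choose a strict closed embedding $i \colon Z \hookrightarrow E$ with $E \to \Spec k$ log smooth (for instance $\AA^N$ with trivial log structure, possibly after a log blow-up of $Z$). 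Then $E \to \Spec k$ is log flat, so bivariance of $c$ with log flat pullback yields $c([E]) = [E] \times c([\Spec k])$. Since $i$ is a strict regular embedding with $i^![E] = [Z]$, bivariance of $c$ with the strict Gysin map $i^!$ propagates this identity to $c([Z]) = [Z] \times c([\Spec k])$. Corollary~\ref{NoetherianInduction} and linearity reduce an arbitrary $\beta$ to this case.

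The main obstacle is globalising the enveloping bundle: a general log scheme $Z$ need not admit a strict closed embedding into a log smooth scheme, so one must work on an affine cover using charts, reglue via the excision sequence of Corollary~\ref{cor:excision}, and check independence of the local choice of enveloping $E$ (two choices being dominated by their product $E_1 \times_{\Spec k} E_2$). One also has the standard bookkeeping of log blow-up refinements required so that every external product, strict Gysin map, and flat pullback appearing in the argument operates on cycles carrying compatible fundamental class. Once $c(\beta) = \beta \times c([\Spec k])$ is established in general, $\Phi$ and $\Psi$ are mutually inverse and the canonical isomorphism follows.
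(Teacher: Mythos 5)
Your overall route is the same as the paper's: evaluate a bivariant class at $[\Spec k]$, invert via external product, and prove the two are mutually inverse by factoring the test object's structure map through an enveloping space that \emph{is} log flat over the point, then transporting the identity back along the zero-section Gysin map. The paper's proof is exactly this computation, $f([V]) = 0^!([E]\times f([\Spec k]))$.

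However, there is a genuine gap in your key step: the construction of $E$. Your suggested model, $\AA^N$ with \emph{trivial} log structure, cannot receive a strict closed embedding from any $Z$ with nontrivial log structure --- strictness means $\cM_Z$ is pulled back from $\cM_{\AA^N}=\cO^*_{\AA^N}$, which would force $\cM_Z=\cO_Z^*$, and no log blow-up removes the log structure. Asking for $E$ log smooth over $\Spec k$ is also more than can be arranged (and more than is needed, since bivariance only requires log flat pullback). Your fallback --- working chart by chart and regluing via the excision sequence of Corollary~\ref{cor:excision} --- is not carried out and is not an obvious substitute: excision is a right-exact statement about cycle groups and does not by itself patch together identities of bivariant operations across an open cover. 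The paper sidesteps all of this by taking $E$ to be the globally defined vector bundle on the (locally free) test scheme $V$ determined by the $\NN$-faces of its log structure, with local charts $E\mid_U \cong U\times \AA^n \to \AA^n$ and log structure pulled back from the toric $\AA^n$: then the zero section $V\to E$ is a strict regular embedding by construction, $E\to\Spec k$ is log flat via the chart projection, and no gluing argument is required. With that replacement for your $E$, the rest of your argument (bivariance with log flat pullback gives $c([E])=[E]\times c([\Spec k])$, then $0^!$ propagates it to $[V]$) goes through as in the paper.
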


\begin{proof}

For any locally free log scheme we may embed $V$ strictly as the zero section of a vector bundle $E$ defined by the $\NN$ faces of the log structure, together with sections defined up to an action of the \'etale fundamental group. From the local charts $E{\mid}_U \cong U \times \AA^n \to \AA^n$ where the identification is the one induced by the sections we see that $E$ is in fact log flat over the trivial log point.
  
Let $f$ be a bivariant class in $A_\dagger ^* (X \to \Spec k)$ and let $\alpha$ be $f (\Spec k)$, the corresponding evaluation against a fundamental class. Then let $g: V \to \Spec k$ be a log scheme with $V$ locally free, we have the following diagram:

\begin{figure}[h]
  \centering
  \begin{tikzcd}
    V \times X \arrow{r} \arrow{d} & E \times X \arrow {r} \arrow{d} & X \arrow{d} \\
    V \arrow{r}{0} & E \arrow{r} & \Spec k
    \end{tikzcd}
  \end{figure}

Since the horizontal rows are strict regular embeddings and flat morphisms the whole square has commuting Gysin maps. In particular we see that the value of $\alpha ([V])$ is $0^! ([E] \times \alpha)$. Therefore this evaluation map was an isomorphism.
  
\end{proof}

\begin{rmk}

  There is no such analogous construction over $\Spec k^\dagger$, and in general there is not a natural map $A_* (X) \to A^{-*} (X \to \Spec k^\dagger)$ splitting the one $A^{-*} (X \to \Spec k^\dagger) \to A_* (X)$. The fundamental reason is that if $X$ and $Y$ are locally free there is no reason that $X \times_{\Spec k^\dagger} Y$ need be locally free.

By~\cite{BlochGilletSoule} and applied in~\cite{vG} we can construct the Chow cohomology group of the central fibre of a strict semi-stable degeneration. This allows one to intersect with classes occuring as limits of families from the general fibre. In our setting this can be interpreted by showing that after some log blow-ups the limits of such families can be represented by cycles on the central fibre which are log flat over the standard log point. These have a well defined log flat pullback and pulling back along a strict resolution of the diagonal gives the desired product.
  
\end{rmk}

We can now apply this corollary to obtain several useful examples.

\begin{eg}
\label{eg:SmoothGysin}
  Let $X \to Y$ be a morphism with $Y$ log smooth over the trivial log point, suppose that $X$ is of pure log dimension $d_X$, $Y$ is of pure log dimension $d_Y$, and write $d = d_Y - d_X$. Then there is a Gysin pullback map in $A_\dagger ^d (X \to Y)$ corresponding to the fundamental class in $A^\dagger _{d_X} (X)$. 
  
\end{eg}

\begin{eg}
  \label{cor:combinatorialstructure}
  We apply the above Poincar\'e duality result to the map $X \to \cA_X$ for a log scheme $X$ to obtain canonical isomorphism $ A^* _\dagger (X \to \Spec k) \cong A^* _\dagger (X \to \cA_X)$.

  This makes $A_* ^\dagger (X)$ into a module over the ring $A^* _\dagger (\cA_X)$, which is in turn isomorphic to $A_{-*} ^\dagger (\cA_X)$. This is equivalent to the statement that one can intersect cycles with the closure of locally free log strata, since these strata are intersections of Cartier divisors.

  Furthermore when $X$ is a toric variety with the toric log structure the toric divisors on blow-ups of $X$ span both $A_*^\dagger (X)$ and $A_* ^\dagger (\cA_X)$. Therefore the ring $A_* ^\dagger (\cA_X)$ is also isomorphic to the McMullen polytope algebra $\Pi$. I do not know of a construction of a polytope algebra for a general Artin fan.
  
\end{eg}

Extending this to the relative setting we obtain:

\begin{eg}

  Let $f : X \to Y$ be a morphism of log stacks. The relative Artin fan $\cA_{X/Y}$ of~\cite{BoundednessOfTheSpaceOfStableLogarithmicMaps} admits a strict log \'etale map from $Y$, and hence there is an isomorphism $A_\dagger ^* (X \to Y) \cong A_\dagger^* (X \to \cA_{X/Y})$.
  
\end{eg}

\section{The log normal cone}

Given a morphism of log schemes $f: X \to Y$ Leo Herr in~\cite{LeosPaper} defines a \emph{log normal cone}, $\cC_{X/Y}^\ell$, a cone strict over $X$ of dimension equal to the dimension of $\cL og_Y$. If $f$ is strict this is simply the usual normal cone. In general one factors $f$ locally as a strict closed embedding into a smooth cover of $Y$. Equivalently this is the restriction of the cone of $\cL og_X$ over $\cL og_Y$ to $X$.

Let us give the standard example, due to W. Brauer, of how this can fail to be well behaved under log blow-ups, and how to fix it:

\begin{eg}

  Let $i: \Spec k _{\NN^2} \to \AA^2$ be the origin inside $\AA^2$ with its toric log structure. Let $\phi: Bl_0 (\AA^2) \to \AA^2$ be the blow up at the origin. The fibre product of these two morphisms defines a log structure on $\PP^1$. We write $j: \PP^1 \to Bl_0 (\AA^2)$ for the induced inclusion.

  The normal cone of $i$ is the trivial rank two bundle on $\Spec k$, while the normal cone for $j$ is $\cO(-1)$ on $\PP^1$. It is of course clear for rank reasons that $\phi^* \cC^\ell_{i} \not \cong \cC^\ell _j$, even though the log blow-ups are log flat. We can however form the following diagram:

  \begin{figure}[h]
    \centering
    \begin{tikzcd}
      \phi^* \cC^\ell_i \arrow{r} \arrow{d} & \cC^\ell_i \arrow{d} \\
      \PP^1 \arrow{r} \arrow{d} & \Spec k_{\NN^2} \arrow{d} \\
      Bl_0(\AA^2) \arrow{r}{\phi} & \AA^2
    \end{tikzcd}
  \end{figure}

  Where the bottom row has a canonical perfect obstruction theory. This induces a relative dimension zero Gysin map for the top row, so we may compare $[\cC^\ell_j]$ and $\phi^! ([\cC^\ell_i])$ inside $\phi^* \cC^\ell_i$. In this case we are simply pulling back and capping with the top Chern class of the excess obstruction bundle, which gives $[\cO(-1)]$ embedded inside the trivial rank two bundle, exactly equal to the pushforward of the cone $\cC^\ell_j$.
  
  \end{eg}

The solution therefore seems to be that this cone defines a log class on itself different from the fundamental class. Indeed for a large class of targets it does:

\begin{thm}
\label{thm:normalcone}
  Let $X \to Y$ be a log morphism such that $Y$ has compatible fundamental class and $X$ locally free and suppose that $\pi: \tilde{X} \to X$ is a locally free log blow-up. Then $\cC^\ell _{\tilde{X}/Y}$ is a subscheme of $\cC^\ell _{X/Y} \times _X \tilde{X}$ with inclusion $i$ and $i_* ([\cC^\ell _{\tilde{X}/Y}]) = \pi^! ([\cC^\ell _{X/Y}])$.
  
\end{thm}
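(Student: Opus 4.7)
The strategy generalises the excess-intersection argument given in the $Bl_0(\AA^2) \to \AA^2$ example preceding the theorem. The plan is to identify $\pi^![\cC^\ell_{X/Y}]$ as an excess-intersection class on the ambient cone $\pi^*\cC^\ell_{X/Y}$, to recognise the sub-cone cut out by the excess obstruction bundle of $\pi$ as exactly $\cC^\ell_{\tilde{X}/Y}$, and to pin down multiplicities on a toric chart.

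First I would reduce to an étale-local statement on $X$. Both sides of the claimed equality behave well under strict étale base change: the Gysin map $\pi^!$ was defined étale-locally via Chang--Li, and Herr's construction of $\cC^\ell$ is compatible with strict étale localisation. Choosing a chart $U \to X$ over a point with chart $\AA^n$ and setting $\tilde{U} = U \times_X \tilde{X}$, the map $\pi|_{\tilde{U}}$ is pulled back from the toric blow-up $Bl_I(\AA^n) \to \AA^n$, and its canonical perfect obstruction theory is the pullback of the one on this toric model.

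Next I would use the hypothesis that $Y$ has compatible fundamental class to present the cones concretely. Locally, factor $X \to Y$ as a strict closed immersion $X \hookrightarrow P$ followed by a log smooth morphism $P \to Y$, so that $\cC^\ell_{X/Y}$ is identified with the classical normal cone $\cC_{X/P}$; the compatibility hypothesis on $Y$ is what ensures this local presentation interacts well with passing to log blow-ups and with the cycle $[\cC^\ell_{X/Y}]$. Base changing, $\pi_P : \tilde{X} \times_X P \to P$ is log étale and carries a strict closed immersion of $\tilde{X}$, so Herr's Remark 2.14 identifies $\cC^\ell_{\tilde{X}/Y}$ as a closed sub-cone of $\pi^*\cC^\ell_{X/Y}$ via $i$, with cokernel measured by the excess obstruction bundle of $\pi$. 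This cokernel is a genuinely classical phenomenon coming from the non-étaleness of the underlying morphism $\ul{\pi}$, even though $\Log{\tilde{X}/X}$ vanishes.

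Applying $\pi^!$ to $[\cC^\ell_{X/Y}]$ then yields a top-dimensional cycle on $\pi^*\cC^\ell_{X/Y}$ (since $\pi$ has POT of relative dimension zero), and by the standard excess-intersection formula this cycle is supported on the sub-cone cut out by the excess obstruction bundle, which by the previous step is precisely $\cC^\ell_{\tilde{X}/Y}$. The main obstacle is the final multiplicity check: one must verify that at each generic point of $\cC^\ell_{\tilde{X}/Y}$ the class $\pi^![\cC^\ell_{X/Y}]$ appears with multiplicity exactly one, rather than only being supported in the correct locus. This step reduces to a direct computation on the toric chart $Bl_I(\AA^n) \to \AA^n$, in the same spirit as the $Bl_0(\AA^2)$ example and the dimension/regularity argument at the end of the proof of Lemma~\ref{compatiblefundamentalclass}, where regularity of the relevant embedding in the toric chart forces the two top-dimensional classes to coincide.
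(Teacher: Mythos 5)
Your overall strategy differs from the paper's and has a genuine gap at its centre. The paper proves the identity by strictly embedding $X$ into a log scheme $U$ log smooth over $Y$ (locally free near the image of $X$), lifting $\pi$ to an lci blow-up $\pi_U:\tilde{U}\to U$, and invoking Vistoli's explicit rational equivalence
\[ [\cC^\ell_{\cC^\ell_{X/U}\times_U\tilde{U}/\cC^\ell_{X/U}}] = [\cC^\ell_{\cN^\ell_{\tilde{U}/U}\times_U X/\cN^\ell_{\tilde{U}/U}}] \]
in $A_*(\cC^\ell_{X/U}\times_U\cN^\ell_{\tilde{U}/U})$; intersecting with the zero section of $\cN^\ell_{\tilde{U}/U}$ yields exactly $\pi^![\cC^\ell_{X/U}]=i_*[\cC^\ell_{\tilde{X}/\tilde{U}}]$, and the statement over $Y$ follows because the log cotangent bundle of $U$ over $Y$ pulls back to that of $\tilde{U}$ over $Y$, so the equivalence descends to the quotient cones. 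The entire role of Vistoli's equivalence is to produce a rational equivalence that \emph{localises} $\pi^![\cC^\ell_{X/Y}]$ onto the sub-cone $\cC^\ell_{\tilde{X}/Y}$.

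That localisation is the step your argument assumes rather than proves. The excess-intersection formula writes $\pi^![\cC^\ell_{X/Y}]$ as the top Chern class of the excess bundle capped with $[\pi^*\cC^\ell_{X/Y}]$; this is an element of $A_*(\pi^*\cC^\ell_{X/Y})$ defined only up to rational equivalence, and nothing in that formula says it is represented by a cycle supported on $\cC^\ell_{\tilde{X}/Y}$ --- since $i_*$ is far from injective, exhibiting the class as $i_*$ of a specific class on the sub-cone is the whole content of the theorem. Your proposed multiplicity check at generic points of $\cC^\ell_{\tilde{X}/Y}$ cannot repair this, because $\cC^\ell_{\tilde{X}/Y}$ has positive codimension in $\pi^*\cC^\ell_{X/Y}$ in general: in the motivating Brauer example it is $\cO(-1)$ sitting inside the trivial rank-two bundle over $\PP^1$, of codimension one. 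For the same reason your assertion that $\pi^![\cC^\ell_{X/Y}]$ is top-dimensional on $\pi^*\cC^\ell_{X/Y}$ is false in that example ($\pi^!$ preserves dimension, while $\pi^*\cC^\ell_{X/Y}$ has strictly larger dimension than $\cC^\ell_{X/Y}$ there). A subsidiary worry: identities of rational equivalence classes cannot in general be verified after pullback to an \'etale cover, since such pullbacks are not injective on Chow groups, so your opening reduction also needs justification. To close the gap you need a global rational equivalence on a space dominating both cones; Vistoli's double-deformation construction is the standard, and the paper's, way to produce it.
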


\begin{proof}

  We apply Vistoli's rational equivalence from~\cite{Vistoli}. Since $X$ is locally free there is a strict embedding of $X$ into a log scheme $U$ log smooth over $Y$ which is therefore free in a neighbourhood of the image of $X$, say $U$. The scheme $U$ has compatible fundamental class and the blow-up of $X$ lifts to a blow-up of $U$, $\pi_U: \tilde{U} \to U$. Then $\pi_U$ is an lci morphism, so the normal cone to it is a vector bundle stack, $\cN^\ell_{\tilde{U}/U}$.

  We take the following Cartesian diagram:
  \begin{figure}[h]\centering\begin{tikzcd}
    \tilde{X} \arrow{r} \arrow{d} & \tilde{U} \arrow {d} \\ X \arrow{r} & U
  \end{tikzcd}\end{figure}
  
  \noindent In~\cite{Vistoli} Vistoli produces an explicit rational equivalence:
  \[ [\cC^\ell_{\cC^\ell_{X/U} \times_U \tilde{U} / \cC^\ell_{X/U}}] = [\cC^\ell_{\cN^\ell_{\tilde{U}/U} \times_U X / \cN^\ell_{\tilde{U}/U}}] \]
  inside $A_* (\cC^\ell_{X/U} \times_U \cN^\ell_{\tilde{U}/U})$. Pulling back this relation along the zero section of $\cN^\ell_{\tilde{U}/U}$ gives an equality $[\pi^! \cC^\ell_{X/U}] \cong i_*[\cC^\ell_{\tilde{X}/\tilde{U}}]$ inside $A_* (\tilde{X} \times_X \cC^\ell_{X/U})$.

  The log cotangent bundle of $U$ over $Y$ pulls back to that on $\tilde{U}$ since the map $U \to Y$ is log smooth and $\tilde{U} \to U$ log \'etale. Therefore the above equivalence descends to one in the quotient cone.
    
\end{proof}

As a corollary one obtains:

\begin{cor}
\label{cor:conedefined}
  Let $f: X \to Y$ be a log morphism with $X$ locally free and $Y$ having compatible fundamental class. Let $\cC^\ell_{X/Y}$ be the log normal cone with log structure pulled back from $X$. Then there is a canonical class in $\cC^\ell_{X/Y}$ given by the fundamental class. 
  
\end{cor}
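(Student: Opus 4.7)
The plan is to build the desired class in $A_*^\dagger(\cC^\ell_{X/Y})$ as a compatible system of cycle classes under the colimit defining log Chow, exploiting the fact that (by hypothesis) the log structure on $\cC^\ell_{X/Y}$ is pulled back from $X$. Under this hypothesis, every locally free log blow-up of $\cC^\ell_{X/Y}$ is dominated by one of the form $\cC^\ell_{X/Y} \times_X \tilde{X}$ for a locally free log blow-up $\pi: \tilde{X} \to X$, since log blow-ups are defined by log ideals and these ideals pull back through the strict structural map. So it suffices to give a compatible system indexed by such blow-ups of $X$.

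On each blow-up $\cC^\ell_{X/Y} \times_X \tilde{X}$, which is a locally free log scheme, I would take the cycle class $i_*[\cC^\ell_{\tilde{X}/Y}]$, where $i:\cC^\ell_{\tilde{X}/Y} \hookrightarrow \pi^*\cC^\ell_{X/Y}$ is the inclusion supplied by Remark 2.14 of~\cite{LeosPaper}. When $\tilde{X}=X$ this is simply $[\cC^\ell_{X/Y}]$, so the system restricts correctly to the base of the colimit. By Theorem~\ref{thm:normalcone} applied to the pair $(X,Y)$ and the blow-up $\pi$, we have $i_*[\cC^\ell_{\tilde{X}/Y}] = \pi^![\cC^\ell_{X/Y}]$, which is precisely the compatibility condition for the first step of the colimit.

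For compatibility along a further locally free log blow-up $\pi': \tilde{X}' \to \tilde{X}$, I would reapply Theorem~\ref{thm:normalcone} with $\tilde{X}$ now playing the role of $X$: the hypotheses are preserved, namely $\tilde{X}$ is locally free and $Y$ still has compatible fundamental class. This yields $i'_*[\cC^\ell_{\tilde{X}'/Y}] = (\pi')^![\cC^\ell_{\tilde{X}/Y}]$ inside $A_*(\cC^\ell_{\tilde{X}/Y} \times_{\tilde{X}} \tilde{X}')$. To translate this back to the ambient blow-ups of $\cC^\ell_{X/Y}$, I would use the commutativity in Cartesian squares of strict Gysin pushforwards with the Gysin map for $\pi'$ (a locally free log blow-up of $\cC^\ell_{X/Y}\times_X\tilde X$) established in the functoriality theorem of the previous section: applying $i_*$ and invoking this commutativity gives $(\pi')^!\bigl(i_*[\cC^\ell_{\tilde{X}/Y}]\bigr) = i'_*[\cC^\ell_{\tilde{X}'/Y}]$, which is exactly the compatibility required for the colimit.

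The main obstacle I expect is the bookkeeping in this last step, namely matching the Gysin pullback along the blow-up $\cC^\ell_{X/Y}\times_X\tilde{X}' \to \cC^\ell_{X/Y}\times_X\tilde X$ of total cone spaces with the Gysin pullback along $\pi'$ appearing directly in Theorem~\ref{thm:normalcone}; this relies on identifying the obstruction theory of the former with the pullback of that of $\pi'$, which holds because the cone blow-up is a pullback of the base blow-up along a strict morphism. Once these identifications are in place, the compatible system $\{i_*[\cC^\ell_{\tilde{X}/Y}]\}$ determines a canonical element of the colimit, and this element is by construction represented at the trivial blow-up by the fundamental class $[\cC^\ell_{X/Y}]$.
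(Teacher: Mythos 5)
Your proof is correct and is essentially the argument the paper intends: the corollary is stated as an immediate consequence of Theorem~\ref{thm:normalcone}, and your construction of the compatible system $\{i_*[\cC^\ell_{\tilde{X}/Y}]\}$ over the blow-ups $\cC^\ell_{X/Y}\times_X\tilde{X}$ (which suffice because the cone is strict over $X$, so log ideals on it are pulled back from $X$) is exactly the content needed to make the fundamental class a well-defined element of the colimit. The only elaboration beyond the paper is your explicit handling of compatibility under iterated blow-ups, which is a welcome addition rather than a deviation.
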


Another corollary answers a question of Herr in~\cite{LeosPaper} on the existence of Gysin maps.

\begin{con}

We extend the construction of~\cite{LeosPaper} Section 3 to produce Gysin maps in the classical sense defined on the level of cycle classes. Let $f: X \to Y$ be a log morphism. The data of a rank $d$ log perfect obstruction theory is the same as the data of a rank $d$ perfect obstruction theory for $X \to \cA_{X/Y}$. The map $X \to \cA_{X/Y}$ is strict, and so the construction of~\cite{VirtualPullbacks} produces a Gysin map in $A_\dagger ^d (X \to \cA_{X/Y})$ which we call the \emph{Herr-Gysin pullback}. Composing with the log \'etale map $\cA_{X/Y} \to Y $ we obtain a Gysin map in $A_\dagger ^d (X \to Y)$.

When $Y$ is equidimensional and has maximal pure log dimension the stack $\cA_{X/Y}$ is also equidimensional of the same dimension, it admits an \'etale cover by log blow-ups of $Y$. By the above corollary the Gysin map is stable under further log blow-ups of $Y$ or $X$. This pure dimensionality is automatically the case when $Y$ is connected and log smooth over the trivial log point. 

\end{con}

When $X \to Y$ is strict the log normal cone is given by the normal cone, and this strictness often holds for many log moduli problems. We can give a refinement of the phenomena seen in~\cite{LogarithmicGromovWittenTheoryWithExpansions} using the following corollary of~\ref{thm:normalcone}.

\begin{cor}
\label{cor:DhruvRelation}
  Let $\cM$ be a locally free finite-type log stack with a map $\cM \to \frM$ to a log smooth locally finite-type moduli stack $\frM$ and a perfect obstruction theory $\EE$. Let $\pi: \tilde{\frM} \to \frM$ be a locally free log \'etale morphism and $\tilde{\cM}$ the fibre product. Then there is an equality $\pi^![\cM]^{vir} = [\tilde{\cM}]^{vir}$.   
  
\end{cor}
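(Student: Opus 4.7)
The plan is to unpack the virtual class construction and reduce the statement to Theorem~\ref{thm:normalcone}. Write $[\cM]^{vir} = 0^!_{\EE}([\cC^\ell_{\cM/\frM}])$ and $[\tilde{\cM}]^{vir} = 0^!_{\tilde{\EE}}([\cC^\ell_{\tilde{\cM}/\tilde{\frM}}])$, where each Gysin map is that of the zero section of the ambient vector bundle stack. Abusively also denote the base change $\tilde{\cM} \to \cM$ by $\pi$; then $\tilde{\EE} := \pi^* \EE$ is a perfect obstruction theory for $\tilde{\cM} \to \tilde{\frM}$, because log \'etaleness of $\pi$ means the log cotangent complex pulls back.

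Before applying Theorem~\ref{thm:normalcone} there are two things to check. First, the theorem's hypothesis is that the target has compatible fundamental class; since $\frM$ is log smooth over the trivial log point it is locally free, and after a log blow-up I may assume maximal pure log dimension, so Lemma~\ref{compatiblefundamentalclass} applies. Second, Theorem~\ref{thm:normalcone} is stated for locally free log blow-ups, whereas here $\pi$ is only locally free log \'etale. However, the proof of that theorem embeds $\cM$ strictly into a log smooth envelope $U$ over $\frM$ and invokes Vistoli's rational equivalence for the lifted map $\tilde{U} \to U$, using only its log \'etaleness and lci-ness, not any birationality. The same argument therefore carries over, and I expect this verification — making sure no hidden use of birationality appears — to be the main obstacle in the write-up.

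Once this is settled, Theorem~\ref{thm:normalcone} supplies the equality $i_*[\cC^\ell_{\tilde{\cM}/\tilde{\frM}}] = \pi^![\cC^\ell_{\cM/\frM}]$ inside $A_*^\dagger(\pi^*\cC^\ell_{\cM/\frM})$. Embedding both sides into the ambient $\tilde{\EE}$ and applying the zero-section Gysin map, one computes
\begin{align*}
[\tilde{\cM}]^{vir} &= 0^!_{\tilde{\EE}} \, i_*[\cC^\ell_{\tilde{\cM}/\tilde{\frM}}] \\
 &= 0^!_{\tilde{\EE}} \, \pi^![\cC^\ell_{\cM/\frM}] \\
 &= \pi^! \, 0^!_{\EE} [\cC^\ell_{\cM/\frM}] \\
 &= \pi^! [\cM]^{vir}.
\end{align*}
The first equality holds because the zero-section Gysin map depends only on the class in the total space of the vector bundle stack, not on its presentation as the pushforward from a subcone; the second invokes the theorem; the third uses commutativity of Gysin maps with $\pi^!$, established in the functoriality theorem of Section~2; the fourth is the definition of $[\cM]^{vir}$. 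This closes the argument.
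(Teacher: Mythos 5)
Your proposal is correct and is exactly the intended derivation: the paper states this as an unproved corollary of Theorem~\ref{thm:normalcone}, and your reduction --- pull back the obstruction theory along the log \'etale base change, apply the theorem to identify $i_*[\cC^\ell_{\tilde{\cM}/\tilde{\frM}}]$ with $\pi^![\cC^\ell_{\cM/\frM}]$ inside $\tilde{\EE}=\pi^*\EE$, and commute $0^!_{\EE}$ past $\pi^!$ --- is the argument the paper has in mind. The only step worth making explicit is the identification $\cC^\ell_{\tilde{\cM}/\tilde{\frM}} \cong \cC^\ell_{\tilde{\cM}/\frM}$ (needed because the theorem fixes the target), which follows from the same vanishing of $\Log{\tilde{\frM}/\frM}$ that you already invoke for the obstruction theory.
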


\begin{eg}

  Suppose that $\frM$ is the moduli stack of curves $\frM^{log}_{g,n}$ and $\cM_{g,n} \to \frM_{g,n}^{log}$ is the strict classifying map for a space of stable maps. Then we may choose a log blow-up of an finite-type open substack $\pi: \widetilde{\frM}_{g,n}^{log} \to \frM_{g,n}^{log}$ such that $\widetilde{\frM}_{g,n}^{log}$ is locally free. We write $\widetilde{\cM}_{g,n}$ for the fibre product which is locally free and we may arrange to have compatible fundamental class. This stack inherits a virtual fundamental class. The pushforward of the fundamental class of $\widetilde{\frM}_{g,n}^{log}$ is the fundamental class of $\frM_{g,n}^{log}$ and so $\pi_* ([\widetilde{\cM}_{g,n}]^{vir}) = [\cM_{g,n}]^{vir}$.

  Now suppose that $\frN_{g,n}^{log} \to \frM_{g,n}^{log}$ is another log blow-up such that the pullback of $\cM_{g,n}$, which we write $\cN_{g,n}$. Choose a further blow-up of an open finite-type substack $\psi: \widetilde{\frN}_{g,n}^{log} \to \frN_{g,n}^{log}$ with $\widetilde{\frN}^{log}_{g,n}$ locally free and admitting a map to $\widetilde{\frM}_{g,n}^{log}$. Then we have a commuting cube:

  \[
    \begin{tikzcd}[row sep=1.5em, column sep = 1.5em]
    \widetilde{\cN}_{g,n} \arrow[rr] \arrow{dr}{\phi} \arrow[dd] &&
    \cN_{g,n} \arrow[dd] \arrow[dr] \\
    & \widetilde{\cM}_{g,n} \arrow[dd] \arrow[rr] &&
    \cM_{g,n} \arrow[dd] \\
    \widetilde{\frN}_{g,n}^{log} \arrow[rr,] \arrow[dr] && \frN_{g,n}^{log} \arrow[dr] \\
    & \widetilde{\frM}_{g,n}^{log} \arrow[rr] && \frM_{g,n}^{log}
    \end{tikzcd}
    \]

    \noindent satisfying $\pi_* ([\widetilde{\cN}_{g,n}]^{vir}) = [\cN_{g,n}]^{vir}$ and by the above $[\widetilde{\cN}_{g,n}]^{vir} = \phi^! [\widetilde{\cM}_{g,n}]^{vir}$. This implies $\phi_* [\widetilde{\cN}_{g,n}]^{vir} =  [\widetilde{\cM}_{g,n}]^{vir}$. Pushing forward this relation to $A_* (\cM_{g,n})$ we obtain the compatibility of~\cite{LogarithmicGromovWittenTheoryWithExpansions} Proposition 3.6.2.
    
\end{eg}

\bibliographystyle{alpha}
\bibliography{Uploaded}

\begin{thebibliography}{ACMW14}

\bibitem[ACMW14]{BoundednessOfTheSpaceOfStableLogarithmicMaps}
D.~{Abramovich}, Q.~{Chen}, S.~{Marcus}, and J.~{Wise}.
\newblock {Boundedness of the space of stable logarithmic maps}.
\newblock {\em ArXiv e-prints}, August 2014.

\bibitem[Alu05]{Aluffi}
Paolo Aluffi.
\newblock Modification systems and integration in their {C}how groups.
\newblock {\em Selecta Mathematica}, 11:155--202, 2005.

\bibitem[AW18]{AbramovichWise}
Dan Abramovich and Jonathan Wise.
\newblock Birational invariance in logarithmic {G}romov–{W}itten theory.
\newblock {\em Compositio Mathematica}, 154(3):595–620, 2018.

\bibitem[BGS95]{BlochGilletSoule}
S~Bloch, H~Gillet, and C~Soulé.
\newblock {Algebraic cycles on degenerate fibers}.
\newblock Technical Report IHES-M-95-40, Inst. Hautes Etud. Sci.,
  Bures-sur-Yvette, Apr 1995.

\bibitem[BGv19a]{vG2}
Christian {B{\"o}hning}, Hans-Christian {Graf von Bothmer}, and Michel {van
  Garrel}.
\newblock Prelog {C}how groups of self-products of degenerations of cubic
  threefolds, 12 2019.

\bibitem[BGv19b]{vG}
Christian {B{\"o}hning}, Hans-Christian {Graf von Bothmer}, and Michel {van
  Garrel}.
\newblock {Prelog {C}how rings and degenerations}.
\newblock {\em arXiv e-prints}, page arXiv:1911.08930, November 2019.

\bibitem[{Che}10]{Chen}
Qile {Chen}.
\newblock {Stable logarithmic maps to Deligne-Faltings pairs I}.
\newblock {\em arXiv e-prints}, page arXiv:1008.3090, August 2010.

\bibitem[CL11]{ChangLi}
Huai-liang {Chang} and Jun {Li}.
\newblock {Semi-Perfect Obstruction theory and DT Invariants of Derived
  Objects}.
\newblock {\em arXiv e-prints}, page arXiv:1105.3261, May 2011.

\bibitem[FS97]{FultonSturmfels}
William Fulton and Bernd Sturmfels.
\newblock Intersection theory on toric varieties.
\newblock {\em Topology}, 36(2):335 -- 353, 1997.

\bibitem[Ful98]{Fulton}
William Fulton.
\newblock {\em Intersection Theory}.
\newblock Springer-Verlag, 1998.

\bibitem[{Her}19]{LeosPaper}
Leo {Herr}.
\newblock {The Log Product Formula}.
\newblock {\em arXiv e-prints}, page arXiv:1908.04936, Aug 2019.

\bibitem[Hol]{ExtendingTheDoubleRamificationCycleByResolvingTheAbelJacobiMap}
David Holmes.
\newblock Extending the double ramification cycle by resolving the
  {A}bel-{J}acobi map.
\newblock {\em Journal of the Institute of Mathematics of Jussieu}, page
  1–29.

\bibitem[HPS17]{MultiplicativityOfTheDoubleRamificationCycle}
David Holmes, Aaron Pixton, and Johannes Schmitt.
\newblock Multiplicativity of the double ramification cycle, 2017.

\bibitem[Igu89]{Kato}
J.~Igusa.
\newblock {\em Algebraic analysis, geometry, and number theory: proceedings of
  the JAMI Inaugural Conference}.
\newblock American Journal of Mathematics Supplement Series. Japan-U.S.
  Mathematics Institute, 1989.

\bibitem[Kat89]{KKato}
Kazuya Kato.
\newblock Logarithmic degeneration and {D}ieudonn\'e theory.
\newblock {\em preprint}, 1989.

\bibitem[{Kat}99]{FKato}
Fumiharu {Kato}.
\newblock {Exactness, integrality, and log modifications}.
\newblock {\em arXiv Mathematics e-prints}, page math/9907124, July 1999.

\bibitem[Man12a]{VirtualPullbacks}
Cristina Manolache.
\newblock Virtual pull-backs.
\newblock {\em J. Algebraic Geom.}, 21(2):201--245, 2012.

\bibitem[Man12b]{VirtualPushforward}
Cristina Manolache.
\newblock Virtual push-forwards.
\newblock {\em Geom. Topol.}, 16(4):2003--2036, 2012.

\bibitem[McM93]{McMullen}
Peter McMullen.
\newblock Separation in the polytope algebra, 1993.

\bibitem[NIZ06]{Niziol}
WIES NIZIOL.
\newblock Toric singularities: Log-blow-ups and global resolutions.
\newblock {\em Journal of Algebraic Geometry}, 15, 01 2006.

\bibitem[{Ols}03]{LogarithmicGeometryAndAlgebraicStacks}
Martin~C. {Olsson}.
\newblock {Logarithmic geometry and algebraic stacks.}
\newblock {\em {Ann. Sci. \'Ec. Norm. Sup\'er. (4)}}, 36(5):747--791, 2003.

\bibitem[Ran19a]{LogarithmicGromovWittenTheoryWithExpansions}
Dhruv Ranganathan.
\newblock Logarithmic {G}romov-{W}itten theory with expansions, 2019.

\bibitem[Ran19b]{DhruvProducts}
Dhruv Ranganathan.
\newblock A note on cycles of curves in a product of pairs, 10 2019.

\bibitem[Sho96]{Shokurov}
V.~V. Shokurov.
\newblock 3-fold log models.
\newblock {\em Journal of Mathematical Sciences}, 81(3):2667–2699, 1996.

\bibitem[{Sta}20]{StacksMovingLemma}
The {Stacks project authors}.
\newblock The stacks project.
\newblock \url{https://stacks.math.columbia.edu}, 2020.

\bibitem[Vis89]{Vistoli}
Angelo Vistoli.
\newblock Intersection theory on algebraic stacks and on their moduli spaces.
\newblock {\em Inventiones mathematicae}, 97(3):613--670, 1989.

\end{thebibliography}

\end{document}